\numberwithin{equation}{section}
\newtheorem{em-deff}{Definition}[section]
\newtheorem{lemma}[em-deff]{Lemma}
\newtheorem{theorem}[em-deff]{Theorem}
\newtheorem{corollary}[em-deff]{Corollary}
\newtheorem{proposition}[em-deff]{Proposition}
\newtheorem{em-fact}[em-deff]{Fact}
\newtheorem{em-example}[em-deff]{Example}
\newtheorem{problem}[em-deff]{Problem}
\newtheorem{em-remark}[em-deff]{Remark}
\newenvironment{example}{\begin{em-example} \em }{ \end{em-example}}
\newenvironment{remark}{\begin{em-remark} \em }{ \end{em-remark}}
\newenvironment{deff}{\begin{em-deff} \em }{ \end{em-deff}}
\newcommand{\N}{\mathbb N}
\newcommand{\Z}{\mathbb Z}
\def\ent{\mathrm{ent}}
\def\supp{\mathrm{supp}}
\def\Per{\mathrm{Per}}
\title{Algebraic entropy of generalized shifts on direct products}
\author{Anna Giordano Bruno\\ \small{Dipartimento di Matematica Pura e Applicata} \\ \small{Universit\`a di Padova} \\ \small{Via Trieste, 63 - 35121 Padova, Italy}\\ \small{anna.giordanobruno@math.unipd.it}}
\date{}
\begin{document}

\maketitle

\begin{abstract}
For a set $\Gamma$, a function $\lambda:\Gamma\to \Gamma$ and a non-trivial abelian group $K$, the \emph{generalized shift} $\sigma_\lambda:K^\Gamma\to K^\Gamma$ is defined by $(x_i)_{i\in \Gamma}\mapsto (x_{\lambda(i)})_{i\in\Gamma}$ \cite{AKH}.
In this paper we compute the algebraic entropy of $\sigma_\lambda$; it is either zero or infinite, depending exclusively on the properties of $\lambda$. This solves two problems posed in \cite{AADGH}.
\end{abstract}

\section{Introduction}

The general aim of this paper is to study the algebraic entropy of relevant endomorphisms of abelian groups, as generalized shifts are, in view of the recent results in \cite{AADGH} and \cite{DGSZ}.

\medskip
According to Adler, Konheim and McAndrew \cite{AKM} and Weiss \cite{W} the algebraic entropy is defined as follows. Let $G$ be an abelian group and $F$ a finite subgroup of $G$; for an endomorphism $\phi:G\to G$ and a positive integer $n$, let $T_n(\phi,F)=F+\phi(F)+\ldots+\phi^{n-1}(F)$ be the \emph{$n$-th $\phi$-trajectory} of $F$ with respect to $\phi$. The \emph{algebraic entropy of $\phi$ with respect to $F$} is $$H(\phi,F)={\lim_{n\to \infty}\frac{\log|T_n(\phi,F)|}{n}},$$ and the \emph{algebraic entropy} of $\phi:G\to G$ is $$\ent(\phi)=\sup\{H(\phi,F): F\ \text{is a finite subgroup of } G\}.$$

In Section \ref{background} we collect the general results on the algebraic entropy that we use in this paper, including the so-called Addition Theorem from \cite{DGSZ} (see Theorem \ref{AT} below).

\medskip
In {\cite{AKH}} the notion of generalized shift was introduced as follows.

\begin{deff}
Let $\Gamma$ be a set, $\lambda:\Gamma\to\Gamma$ a function and $K$ an abelian group. The \emph{generalized shift} $\sigma_{\lambda,K}:K^\Gamma\to K^\Gamma$ is defined by $(x_i)_{i\in \Gamma}\mapsto (x_{\lambda(i)})_{i\in\Gamma}$. When there is no need to specify the group $K$, we simply write $\sigma_\lambda$.
\end{deff}

In Section \ref{background} we give basic properties of the generalized shifts.

The interest in studying the generalized shifts arises from the fact that there is a close relation between the generalized shifts and the Bernoulli shifts: let $K$ be a non-trivial finite abelian group, and denote by $\N$ and $\Z$ respectively the set of natural numbers and the set of integers; then:
\begin{itemize}
\item[(a)] the \emph{two-sided Bernoulli shift} $\overline{\beta}_K$ of the group $K^{\Z}$ is defined by 
$$\overline\beta_K((x_n)_{n\in\Z})=(x_{n-1})_{n\in\Z}, \mbox{ for } (x_n)_{n\in\Z}\in K^{\Z};$$
\item[(b)] the \emph{right  Bernoulli shift} $\beta_K$ and the \emph{left Bernoulli shift} $_K\beta$ of the group $K^{\N}$ are defined respectively by 
$$\beta_K(x_1,x_2,x_3,\ldots)=(0,x_1,x_2,\ldots)\ \mbox{and}\ _K\beta(x_0,x_1,x_2,\ldots)=(x_1,x_2,x_3,\ldots).$$
\end{itemize}
The left Bernoulli shift $_K\beta$ and the two-sided Bernoulli shift $\overline\beta_K$ are relevant for both ergodic theory and topological dynamics and they are generalized shifts (see Example \ref{bernoulli}). The right Bernoulli shift $\beta_K$ restricted to the direct sum $\bigoplus_\N K$ is fundamental for the algebraic entropy (see \cite{DGSZ}). It cannot be obtained as a generalized shift from any function $\lambda:\N\to\N$; nevertheless, it can be well ``approximated" by a generalized shift \cite{AADGH} (see Example \ref{bernoulli}). 

\medskip
In \cite{AADGH} the restriction of a generalized shift $\sigma_\lambda$ to the direct sum $\bigoplus_\Gamma K$ was considered. Indeed, a precise formula was found for the algebraic entropy of this restriction (see \eqref{aadgh-eq} in Theorem \ref{mt-sum}). In particular, the algebraic entropy of $\sigma_\lambda\restriction_{\bigoplus_\Gamma K}$ depends on the combinatorial invariant that measures the number of strings of $\lambda$ (see Definitions \ref{sol} and \ref{sol2}) and on the cardinality of $K$. Note that in this case $\lambda$ must have finite fibers in order that $\bigoplus_\Gamma K$ is $\sigma_\lambda$-invariant.
 
\medskip
Problems 6.1 and 6.2 in \cite{AADGH} ask to calculate the algebraic entropy of $\sigma_\lambda:K^\Gamma\to K^\Gamma$ and to relate this entropy with the algebraic entropy of $\sigma_\lambda\restriction_{\bigoplus_\Gamma K}:\bigoplus_\Gamma K\to \bigoplus_\Gamma K$. We provide a complete answer to these questions.
More precisely, we show that $\ent(\sigma_{\lambda})$ depends only on the combinatorial properties of the map $\lambda$, unlike $\ent(\sigma_\lambda\restriction_{\bigoplus_\Gamma K})$. Indeed, Theorem \ref{mt} shows that $\ent(\sigma_{\lambda})=0$ if and only if $\lambda$ is bounded (in the sense of the next Definition \ref{bounded-def}), otherwise $\ent(\sigma_{\lambda})$ is infinite. 

\smallskip
The function $\lambda:\Gamma\to \Gamma$ of a set $\Gamma$ defines a preorder $\leq_\lambda$ on $\Gamma$ in a natural way: $i\leq_\lambda j$ in $\Gamma$ if there exists $s\in\N$ such that $\lambda^s(j)=i$.
The preorder $\leq_\lambda$ is not an order in general: two distinct elements $i$ and $j$ of $\Gamma$ violate the antisymmetry for $\leq_\lambda$ if and only if $i$ and $j$ are in the same orbit of a periodic point (which could be $i$ or $j$) of $\lambda$.
We say that a subset $I$ of $(\Gamma,\leq_\lambda)$ is \emph{totally preordered} if for every $i,j\in I$ either $i\leq_\lambda j$ or $j\leq_\lambda i$ (without asking that these elements satisfy antisymmetry).

\begin{deff}\label{bounded-def}
Let $\Gamma$ be a set. A function $\lambda:\Gamma\to\Gamma$ is \emph{bounded} if there exists $N\in\N$ such that $|I|\leq N$ for every totally preordered subset $I$ of $(\Gamma,\leq_\lambda)$. 
\end{deff}

In Section \ref{strings} we analyze the properties of a function $\lambda$ which play a role with respect to the algebraic entropy of $\sigma_\lambda$ and find characterizations of bounded functions. Indeed, Theorem \ref{bounded} shows that a function is bounded if and only if it admits no strings, no infinite orbits and no ladders, and has bounded periodic orbits; here, a string of $\lambda$ is an infinite increasing chain in $(\Gamma,\leq_\lambda)$, while an infinite orbit is an infinite decreasing chain in $(\Gamma,\leq_\lambda)$ and a ladder of $\lambda$ is a disjoint union of infinitely many finite chains in $(\Gamma,\leq_\lambda)$ of strictly increasing length where the top element of each finite chain is a maximal element (for the precise definitions of these notions see Definition \ref{sol}).

Theorem \ref{bounded} proves also that for a function $\lambda$ it is equivalent to be bounded or quasi-periodic (the definition is given below).

\smallskip
For a function $f:X\to X$ of a set $X$, a point $x\in X$ is said to be \emph{quasi-periodic} if there exist $n_x<m_x$ in $\N$ such that $f^{n_x}(x)=f^{m_x}(x)$. The function $f$ is \emph{locally quasi-periodic} if every point of $X$ is quasi-periodic, and $f$ is \emph{quasi-periodic} if there exist $n<m$ in $\N$ such that $n_x=n$ and $m_x=m$ for every $x\in X$, that is, $f^n=f^m$.

\smallskip
It is known from \cite{DGSZ} that $\ent(\phi)=0$ if and only if $\phi$ is locally quasi-periodic.
The main goal of this paper is to prove the following theorem, showing in particular that for generalized shifts this ``local" condition becomes ``global", and that a generalized shift of finite algebraic entropy has necessarily entropy zero.

\begin{theorem}\label{mt}
Let $\Gamma$ be a set, $\lambda:\Gamma\to \Gamma$ a function, $K$ a non-trivial finite abelian group, and $\sigma_\lambda:K^\Gamma\to K^\Gamma$ the generalized shift. The following conditions are equivalent:
\begin{itemize}
\item[(a)] $\ent(\sigma_\lambda)=0$;
\item[(b)] $\ent(\sigma_\lambda)$ is finite;
\item[(c)] $\lambda$ is bounded;
\item[(d)] $\sigma_\lambda$ is quasi-periodic;
\item[(e)] $\sigma_\lambda$ is locally quasi-periodic.
\end{itemize}
\end{theorem}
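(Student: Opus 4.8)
The plan is to prove the theorem by closing the cycle of implications $(a)\Rightarrow(b)\Rightarrow(c)\Rightarrow(d)\Rightarrow(e)\Rightarrow(a)$, observing that every arrow except $(b)\Rightarrow(c)$ is either immediate or already available. Indeed, $(a)\Rightarrow(b)$ is trivial, $(d)\Rightarrow(e)$ is the definitional remark that a quasi-periodic map is locally quasi-periodic, and $(e)\Rightarrow(a)$ is precisely the equivalence ``$\ent(\phi)=0$ iff $\phi$ is locally quasi-periodic'' from \cite{DGSZ}. For $(c)\Rightarrow(d)$ I would first record the identity $\sigma_\lambda^k=\sigma_{\lambda^k}$, which follows at once from $\sigma_\lambda^k((x_i)_{i\in\Gamma})=(x_{\lambda^k(i)})_{i\in\Gamma}$; then, since Theorem \ref{bounded} guarantees that a bounded $\lambda$ is quasi-periodic, say $\lambda^n=\lambda^m$ with $n<m$, we obtain $\sigma_\lambda^n=\sigma_{\lambda^n}=\sigma_{\lambda^m}=\sigma_\lambda^m$, so that $\sigma_\lambda$ is quasi-periodic. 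Hence the entire content of the theorem is concentrated in the single implication $(b)\Rightarrow(c)$.

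I would prove $(b)\Rightarrow(c)$ in its contrapositive form: if $\lambda$ is not bounded then $\ent(\sigma_\lambda)=\infty$. Fix once and for all an element $g\in K$ of prime order $p$ (it exists because $K$ is a non-trivial finite abelian group), so that $\langle g\rangle\cong\mathbb{F}_p$. The goal is, for each $r\in\N$, to build a finite subgroup $F_r\le K^\Gamma$ with $H(\sigma_\lambda,F_r)\ge r\log p$; this forces $\ent(\sigma_\lambda)=\sup_r H(\sigma_\lambda,F_r)=\infty$. By Theorem \ref{bounded}, an unbounded $\lambda$ must display at least one of the four obstructions of Definition \ref{sol} --- a string, an infinite orbit, a ladder, or periodic orbits of unbounded length --- and I would organize the construction of the $F_r$ around two different geometries.

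The easier geometry covers the ladder and the unbounded periodic orbits. In both cases $\Gamma$ contains infinitely many pairwise disjoint finite $\lambda$-pieces (the rungs of the ladder, respectively the cycles) of unbounded length, in each of which $\sigma_\lambda$ pushes the support one ``level'' further. Splitting these pieces into $r$ infinite subfamilies $\mathcal F_1,\dots,\mathcal F_r$, each still of unbounded length, and letting $v^{(i)}$ be the element equal to $g$ at one base point of every piece in $\mathcal F_i$ and $0$ elsewhere, the iterates $\sigma_\lambda^k(v^{(i)})$ are supported on pairwise distinct levels; hence for $F_r=\langle v^{(1)},\dots,v^{(r)}\rangle$ the elements $\{\sigma_\lambda^k(v^{(i)})\}$ have pairwise disjoint supports on these pieces and are $\mathbb{F}_p$-independent for all the relevant $k$, giving $|T_n(\sigma_\lambda,F_r)|\ge p^{rn}$ and thus $H(\sigma_\lambda,F_r)\ge r\log p$.

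The main obstacle is the remaining geometry, that of a string or an infinite orbit, where $\Gamma$ carries a single infinite $\lambda$-chain along which $\sigma_\lambda$ acts on indicator vectors as a one-sided shift. Here the iterated images overlap, so disjointness of supports is unavailable and one cannot argue level by level. Identifying the chain with $\N$ and encoding a set $A$ of chain positions by the element $v_A$ equal to $g$ on $A$ and $0$ elsewhere, the projection of $\sigma_\lambda^k(v_A)$ onto the chain is an indicator of a shifted set, and an $\mathbb{F}_p$-linear relation among the $\sigma_\lambda^k(v_{A_i})$ (restricted to the chain) corresponds to an $\mathbb{F}_p(X)$-linear relation among the generating series $f_{A_i}(X)=\sum_{j\in A_i}X^j$. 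Since $\mathbb{F}_p((X))$ has infinite transcendence degree over $\mathbb{F}_p(X)$, I can choose subsets $A_1,\dots,A_r\subseteq\N$ whose series $f_{A_1},\dots,f_{A_r}$ are linearly independent over $\mathbb{F}_p(X)$; equivalently, $\{X^kf_{A_i}\}_{1\le i\le r,\ k\ge0}$ are $\mathbb{F}_p$-independent. Taking $F_r=\langle v_{A_1},\dots,v_{A_r}\rangle$ and projecting the trajectory onto the chain coordinates --- which only discards the harmless extra support coming from $\lambda$-preimages lying off the chain, and so cannot cause cancellations in the lower bound --- yields $|T_n(\sigma_\lambda,F_r)|\ge p^{rn}$ and hence $H(\sigma_\lambda,F_r)\ge r\log p$. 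Combining the two geometries proves that every unbounded $\lambda$ has $\ent(\sigma_\lambda)=\infty$, completing the cycle. The delicate point throughout is precisely the passage from ``independence for finitely many steps'' to ``independence for all $n$'', which the transcendence argument secures and which is what ultimately distinguishes $K^\Gamma$ from $\bigoplus_\Gamma K$.
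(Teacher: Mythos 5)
Your overall skeleton coincides with the paper's: the equivalences (a)$\Leftrightarrow$(e), (c)$\Leftrightarrow$(d), and the reduction of everything to the contrapositive of (b)$\Rightarrow$(c) via the four obstructions of Theorem \ref{bounded} (string, infinite orbit, ladder, periodic ladder) are exactly how the paper proceeds. Where you genuinely diverge is in the key technical step for the string/infinite-orbit case: the paper builds its independent families of finite subgroups \emph{explicitly}, using the iterated-factorial sets $N_m=\{n!^{(m)}:n\in\N_+\}$ and their translates (Lemmas \ref{subsetneq}--\ref{pseudo-eq2-lemma}), whereas you obtain the sets $A_1,\dots,A_r$ \emph{non-constructively}, by observing that $\{0,1\}$-power series span an infinite-dimensional (indeed uncountable-dimensional) space over the countable field $\mathbb{F}_p(X)$, so that $\{X^kf_{A_i}\}$ is $\mathbb{F}_p$-independent. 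Your route is shorter and conceptually cleaner --- it makes transparent that what distinguishes $K^\Gamma$ from $\bigoplus_\Gamma K$ is precisely the availability of ``transcendental'' supports --- at the price of losing the explicit witnesses that the paper's factorial sets provide (and which the paper reuses elsewhere, e.g.\ in Theorem \ref{pre-mt}). For the ladder and periodic-ladder cases you also count trajectories directly, while the paper instead splits the ladder into infinitely many disjoint sub-ladders of positive entropy and invokes Lemma \ref{invariance}(b) together with the fact that nonzero entropy is at least $\log 2$; both work.

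Two small points in your write-up should be tightened. First, for an infinite orbit the backward shift on indicators is not literally multiplication by $X^{-k}$: positions $j<k$ fall off the end, so a relation among the projections gives $\sum_i p_i(X)f_{A_i}(X)=q(X)$ with $q$ a polynomial, not $0$; this is repaired by choosing $1,f_{A_1},\dots,f_{A_r}$ jointly independent over $\mathbb{F}_p(X)$, which the same cardinality argument allows. Second, in the periodic-ladder case the supports of $\sigma_\lambda^k(v^{(i)})$ for different $k$ are \emph{not} pairwise disjoint, because on a cycle of length $\ell$ the shift wraps around for $k\ge\ell$; the independence of the first $n$ iterates must instead be certified by evaluating any putative relation on a single cycle of length exceeding $n$, which your families of unbounded length do provide. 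Neither issue affects the validity of the approach.
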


At the end of Section \ref{background} we see that $\lambda$ is quasi-periodic if and only if $\sigma_\lambda$ is quasi-periodic.
In Section \ref{qp=lqp} we prove first the equivalence of (c), (d) and (e), without involving the algebraic entropy (see Theorem \ref{pre-mt}), even if, as noted previously, (a)$\Leftrightarrow$(e) is already known from \cite{DGSZ} (see Proposition \ref{properties}(a)).

The implication (a)$\Rightarrow$(b) is obvious. The main part of the paper is dedicated to the proof of (b)$\Rightarrow$(a), that is, to prove that if $\ent(\sigma_\lambda)$ is positive, then $\ent(\sigma_\lambda)$ is infinite. Given the above equivalence (a)$\Leftrightarrow$(c), this is the same as proving (b)$\Rightarrow$(c), which is what we verify.

Section \ref{disjoint} contains technical lemmas, which allow the construction of large independent families of finite subgroups of $K^\N$. These families of subgroups are used in the computation of the algebraic entropy of a generalized shift $\sigma_\lambda$ when $\lambda$ admits some string or some infinite orbit. 

Indeed, in Section \ref{infinite} we prove that in presence of a string or of an infinite orbit of $\lambda$, the algebraic entropy of $\sigma_\lambda$ is infinite. The same happens if $\lambda$ has a ladder or if it has periodic orbits of arbitrarily large length. This shows that the entropy of $\sigma_\lambda$ is infinite when the function $\lambda$ is not bounded. 

After the proof of Theorem \ref{mt} we explain how it solves Problems 6.1 and 6.2 in \cite{AADGH}.

\medskip
As applications, in Corollary \ref{beta} we see that the algebraic entropy of the Bernoulli shifts considered on the direct products is infinite, and in Corollary \ref{recurrence} we strengthen a result from \cite{DGSZ}, related to the Poincar\'e--Birkhoff recurrence theorem of ergodic theory, in the particular case of the generalized shifts.

\subsection*{Acknowledgements}

I am grateful to Professor Dikran Dikranjan and Professor Luigi Salce for their useful comments and suggestions.
I would like to express also my thanks to the referee for his/her suggestions.

\section{Preliminary results}\label{background}

We start collecting basic results on the algebraic entropy, mainly from \cite{DGSZ} and \cite{W}, which are applied in the sequel.

\medskip
First of all, since the definition of the algebraic entropy of an endomorphism $\phi$ of an abelian group $G$ is based on the finite subgroups $F$ of $G$, the algebraic entropy depends only on the restriction of $\phi$ on $t(G)$, that is $\ent(\phi)=\ent(\phi\restriction_{t(G)})$. So it makes sense to consider endomorphisms of torsion abelian groups.

\smallskip
Let $G$ be a torsion abelian group, $\phi:G\to G$ an endomorphism and $H$ a $\phi$-invariant subgroup of $G$. Denote by $\overline{\phi}:G/H\to G/H$ the endomorphism induced on the quotient by $\phi$. Then $\ent(\phi)\geq\max\{\ent(\phi\restriction_H),\ent(\overline\phi)\}.$
Moreover, the following important result on the algebraic entropy holds true:

\begin{theorem}[Addition Theorem]\label{AT}\emph{\cite[Theorem 3.1]{DGSZ}}
Let $G$ be a torsion abelian group, $\phi:G\to G$ an endomorphism and $H$ a $\phi$-invariant subgroup of $G$. If $\overline{\phi}:G/H\to G/H$ is the endomorphism induced on the quotient by $\phi$, then $$\ent(\phi)=\ent(\phi\restriction_H)+\ent(\overline\phi).$$
\end{theorem}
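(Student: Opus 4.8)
The plan is to prove the equality by establishing the two opposite inequalities, using throughout the short exact sequence of finite groups
$$0\to T_n(\phi,F)\cap H\to T_n(\phi,F)\xrightarrow{\ \pi\ }T_n(\overline\phi,\overline F)\to 0,$$
where $\pi:G\to G/H$ is the canonical projection, $F$ is an arbitrary finite subgroup of $G$, and $\overline F=\pi(F)$. This sequence is exact because $\pi\circ\phi=\overline\phi\circ\pi$ gives $\pi(T_n(\phi,F))=T_n(\overline\phi,\overline F)$, while the kernel of $\pi\restriction_{T_n(\phi,F)}$ is by definition $T_n(\phi,F)\cap H$. Hence $\log|T_n(\phi,F)|=\log|T_n(\phi,F)\cap H|+\log|T_n(\overline\phi,\overline F)|$ for every $n$, and dividing by $n$ and letting $n\to\infty$ yields $H(\phi,F)=\gamma_F+H(\overline\phi,\overline F)$, where $\gamma_F=\lim_n n^{-1}\log|T_n(\phi,F)\cap H|$ (the limit existing by the usual Fekete/subadditivity argument). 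Everything then reduces to comparing $\gamma_F$ with $\ent(\phi\restriction_H)$.

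For the inequality $\ent(\phi)\ge\ent(\phi\restriction_H)+\ent(\overline\phi)$ I would fix a finite subgroup $F_1\le H$ and a finite subgroup $V\le G/H$; since $G$ is torsion I can lift $V$ to a finite subgroup $F_2\le G$ with $\pi(F_2)=V$, and put $F=F_1+F_2$. As $F_1\subseteq H$ and $H$ is $\phi$-invariant, the whole trajectory $T_n(\phi\restriction_H,F_1)=T_n(\phi,F_1)$ lies inside $T_n(\phi,F)\cap H$; and since $\overline F\supseteq V$ we have $T_n(\overline\phi,\overline F)\supseteq T_n(\overline\phi,V)$. Feeding these two containments into the factorization $|T_n(\phi,F)|=|T_n(\phi,F)\cap H|\cdot|T_n(\overline\phi,\overline F)|$ gives $H(\phi,F)\ge H(\phi\restriction_H,F_1)+H(\overline\phi,V)$, and taking suprema over $F_1$ and $V$ produces the inequality. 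This direction is routine.

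The substantial half is $\ent(\phi)\le\ent(\phi\restriction_H)+\ent(\overline\phi)$. By the factorization it suffices to prove $\gamma_F\le\ent(\phi\restriction_H)$ for every finite $F\le G$, i.e.\ to bound the exponential growth of the intersections $T_n(\phi,F)\cap H$ by trajectories internal to $H$. This is where I expect the main obstacle to lie: $T_n(\phi,F)\cap H$ is a genuine subgroup of $H$ but is \emph{not}, in general, the trajectory of any fixed finite subgroup of $H$, because an element $\sum_{k<n}\phi^k(f_k)$ with $f_k\in F$ can land in $H$ purely through cancellation of its images modulo $H$, so the effective finite ``seed'' in $H$ drifts with $n$. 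Choosing a transversal $S\subseteq F$ for $F\cap H$ and writing each $f_k=s_k+c_k$ with $s_k\in S$, $c_k\in F\cap H$, one splits each such element as a member of $T_n(\phi\restriction_H,F\cap H)$ plus a purely transversal sum $\sum_{k}\phi^k(s_k)$ that is forced into $H$; this isolates the difficulty but, as the transversal sums need not grow subexponentially, a naive count here is too lossy to close the gap on its own. The real content is therefore a structural counting lemma that controls $\gamma_F$ by the trajectories of a suitable finite subgroup of $H$ while simultaneously accounting for the loss against the $\overline\phi$-side, crucially using that $G$ is torsion so that all the auxiliary subgroups involved remain finite. Granting such a bound, $\gamma_F\le\ent(\phi\restriction_H)$ follows, and combined with $H(\overline\phi,\overline F)\le\ent(\overline\phi)$ this yields $H(\phi,F)\le\ent(\phi\restriction_H)+\ent(\overline\phi)$ for all $F$, hence the reverse inequality and the theorem.
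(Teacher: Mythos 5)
Note first that the paper contains no proof of this statement: Theorem \ref{AT} is quoted verbatim from \cite[Theorem 3.1]{DGSZ}, so the only available comparison is with the proof given there. Your framework is sound as far as it goes. The exact sequence $0\to T_n(\phi,F)\cap H\to T_n(\phi,F)\to T_n(\overline\phi,\overline F)\to 0$ is correct, as is the resulting factorization $\log|T_n(\phi,F)|=\log|T_n(\phi,F)\cap H|+\log|T_n(\overline\phi,\overline F)|$, and your derivation of $\ent(\phi)\geq\ent(\phi\restriction_H)+\ent(\overline\phi)$ is complete and standard (the lifting of $V$ to a finite $F_2$ works because a finitely generated torsion abelian group is finite). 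One small repair: $\log|T_n(\phi,F)\cap H|$ is not obviously subadditive in $n$ --- an element $x+\phi^n(y)$ of $T_{n+m}(\phi,F)$ can lie in $H$ without $x$ or $\phi^n(y)$ lying in $H$ --- so Fekete does not apply directly to $\gamma_F$; the limit exists instead as the difference $H(\phi,F)-H(\overline\phi,\overline F)$ of two limits that do exist by subadditivity.

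The genuine gap is the one you flag yourself: the inequality $\gamma_F\leq\ent(\phi\restriction_H)$, which you defer to an unstated ``structural counting lemma,'' \emph{is} the Addition Theorem --- essentially all of its content beyond routine bookkeeping lives there. Your transversal decomposition correctly isolates the obstruction (the sums $\sum_k\phi^k(s_k)$ forced into $H$ by cancellation, so that $T_n(\phi,F)\cap H$ is not the trajectory of any fixed finite subgroup of $H$), but isolating the obstruction is not overcoming it, and as you concede, the naive count diverges. In \cite{DGSZ} this half is the main theorem of the paper and is proved through a chain of reductions (to primary components and to suitably bounded situations, using structural properties of trajectories such as the stabilization of the ratios $|T_{n+1}(\phi,F)|/|T_n(\phi,F)|$) together with a delicate counting argument; no short direct bound on $\gamma_F$ of the kind your sketch would need is known. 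So what you have is a correct proof of the easy inequality, a correct reduction of the hard inequality to the statement $\gamma_F\leq\ent(\phi\restriction_H)$, and an accurate diagnosis of why that statement is hard --- but not a proof of it, and hence not a proof of the theorem.
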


For an endomorphism $\phi$ of a torsion abelian group $G$ and a finite subgroup $F$ of $G$, the \emph{$\phi$-trajectory} of $F$ is $T(\phi,F)=\sum_{n\in\N}\phi^n(F)$.   Let $$t_\phi(G)=\{x\in G:|T(\phi,\langle x\rangle)|\ \text{is finite}\}$$ be the \emph{$\phi$-torsion} subgroup of $G$.
Then $t_\phi(G)$ is the largest $\phi$-invariant subgroup of $G$ such that $\ent(\phi\restriction_{t_\phi(G)})=0$. 
In particular every quasi-periodic point $x$ of $\phi$ in $G$ has finite trajectory and so it is in $t_\phi(G)$.

\medskip
We collect in the following proposition the basic and well-known results on the algebraic entropy that we will use in the paper; for a proof of (a), (b) and (c) see \cite{DGSZ} and \cite{W}, while (d) can be derived from (c), from the finite case and from the monotonicity of the algebraic entropy under taking invariant subgroups, both proved in \cite{W}.

\begin{proposition}\label{properties}
Let $G$ be a torsion abelian group and $\phi:G\to G$ an endomorphism. Then:
\begin{itemize}
\item[(a)] $\ent(\phi)=0$ if and only if $t_\phi(G)=G$ if and only if $\phi$ is locally quasi-periodic.
\item[(b)] If $H$ is another abelian group, $\eta:H\to H$ an endomorphism, and there exists an isomorphism $\xi:G\to H$ such that $\phi=\xi^{-1}\eta\xi$, then $\ent(\phi)=\ent(\eta)$.
\item[(c)]If $G$ is direct limit of $\phi$-invariant subgroups $\{G_i:i\in I\}$, then ${\ent}(\phi)=\sup_{i\in I}\ent(\phi {\restriction_{G_i}})$.
\item[(d)]If $G=\prod_{i\in I} G_i$, where each $G_i$ is a $\phi$-invariant subgroup of $G$, then $\ent(\phi)\geq\sum_{i\in I}\ent(\phi\restriction_{G_i})$.
\end{itemize}
\end{proposition}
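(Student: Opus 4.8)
The plan is to establish the four statements separately, each by reducing to the structural facts about $t_\phi(G)$ quoted above together with monotonicity of $\ent$ under passage to $\phi$-invariant subgroups. For (a), I would first record the elementary observation that, since $G$ is torsion, a cyclic subgroup $\langle x\rangle$ is finite, and $x$ is quasi-periodic (i.e.\ $\phi^n(x)=\phi^m(x)$ for some $n<m$) if and only if the orbit $\{\phi^k(x):k\in\N\}$ is finite, if and only if the trajectory $T(\phi,\langle x\rangle)=\sum_{k\in\N}\phi^k(\langle x\rangle)$ is finite, that is, $x\in t_\phi(G)$. Hence $\phi$ is locally quasi-periodic precisely when $t_\phi(G)=G$. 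The equivalence with $\ent(\phi)=0$ then closes using the quoted fact that $t_\phi(G)$ is the largest $\phi$-invariant subgroup on which $\phi$ has zero entropy: if $\ent(\phi)=0$ then $G$ is itself such a subgroup, forcing $t_\phi(G)=G$; conversely $t_\phi(G)=G$ gives $\ent(\phi)=\ent(\phi\restriction_{t_\phi(G)})=0$. For (b), the point is that an isomorphism $\xi$ carries finite subgroups of $G$ bijectively onto finite subgroups of $H$ and intertwines the iterates, $\eta^k\xi=\xi\phi^k$; hence $\xi(T_n(\phi,F))=T_n(\eta,\xi(F))$ and $|T_n(\phi,F)|=|T_n(\eta,\xi(F))|$ for every finite $F$ and every $n$, so $H(\phi,F)=H(\eta,\xi(F))$, and taking suprema over the matched finite subgroups yields $\ent(\phi)=\ent(\eta)$.

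For (c), one inequality is immediate from monotonicity: each $G_i$ is $\phi$-invariant, so $\ent(\phi\restriction_{G_i})\le\ent(\phi)$ and therefore $\sup_{i\in I}\ent(\phi\restriction_{G_i})\le\ent(\phi)$. For the reverse inequality I would use that any finite subgroup $F$ of $G$ is finitely generated, so in the direct limit its finitely many generators, and hence $F$ itself, lie in some $G_i$; then $H(\phi,F)=H(\phi\restriction_{G_i},F)\le\ent(\phi\restriction_{G_i})\le\sup_{i\in I}\ent(\phi\restriction_{G_i})$, and taking the supremum over $F$ gives $\ent(\phi)\le\sup_{i\in I}\ent(\phi\restriction_{G_i})$.

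For (d), the subtlety --- and the step I expect to be the main obstacle --- is that a finite subgroup of the infinite product $\prod_{i\in I}G_i$ need not be contained in any finite subproduct, so (c) cannot be applied to the product directly. I would instead work inside the $\phi$-invariant subgroup $\bigoplus_{i\in I}G_i$, which is the direct limit of the finite partial products $\prod_{i\in J}G_i$ over finite $J\subseteq I$. Granting additivity in the finite case, namely $\ent(\phi\restriction_{\prod_{i\in J}G_i})=\sum_{i\in J}\ent(\phi\restriction_{G_i})$, part (c) applied to this direct limit yields
$$\ent\Bigl(\phi\restriction_{\bigoplus_{i\in I}G_i}\Bigr)=\sup_{J}\sum_{i\in J}\ent(\phi\restriction_{G_i})=\sum_{i\in I}\ent(\phi\restriction_{G_i}).$$
Finally, monotonicity applied to the invariant subgroup $\bigoplus_{i\in I}G_i\le\prod_{i\in I}G_i$ gives $\ent(\phi)\ge\ent(\phi\restriction_{\bigoplus_{i\in I}G_i})=\sum_{i\in I}\ent(\phi\restriction_{G_i})$, as required. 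The finite additivity invoked here is itself a consequence of the Addition Theorem (Theorem \ref{AT}), applied to $\prod_{i\in J}G_i$ with $H=G_{i_0}$ for a single coordinate and induction on $|J|$, so the whole argument stays self-contained relative to the results already collected.
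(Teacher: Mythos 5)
Your proposal is correct and follows the paper's own route: the paper proves this proposition only by citation to [DGSZ] and [W] for (a)--(c) (your arguments for these are the standard ones from those sources), and for (d) it indicates exactly the derivation you carry out --- reduce to the $\phi$-invariant subgroup $\bigoplus_{i\in I}G_i$, apply (c) to the direct limit of finite partial products, get finite additivity from Theorem \ref{AT}, and finish by monotonicity under passage to invariant subgroups. No gaps; nothing further is needed.
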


\medskip
Now we summarize the preliminary results on the generalized shifts, recalling in Proposition \ref{composition} some basic facts which are mostly proved in \cite{AADGH} and \cite{AKH}.

\smallskip
Let $\Gamma$ be a set and $\lambda:\Gamma\to\Gamma$ a function. If $K$ is an abelian group, the support of an element $x=(x_i)_{i\in\Gamma}$ of $K^\Gamma$ is $\supp(x)=\{i\in\Gamma: x_i\neq 0\}$. If $\Lambda\subseteq\Gamma$, we identify in the natural way $K^\Lambda$ with the subgroup $\{x\in K^\Gamma:\supp(x)\subseteq \Lambda\}$ of $K^\Gamma$.

\smallskip
If $H$ is a subgroup of $K$, then $H^{\Gamma}$ is a $\sigma_{\lambda,K}$-invariant subgroup of $K^\Gamma$. Moreover, $\sigma_{\lambda,K}\restriction_{H^{\Gamma}}=\sigma_{\lambda,H}:H^\Gamma\to H^\Gamma$.

\begin{proposition}\label{composition}\label{G_F}
Let $\Gamma$ be a set, $\lambda,\mu:\Gamma\to\Gamma$ functions, $K$ a non-trivial abelian group, and consider the generalized shifts $\sigma_\lambda,\sigma_\mu:K^\Gamma\to K^\Gamma$. Then:
\begin{itemize}
    \item[(a)] $\sigma_\lambda\circ\sigma_\mu=\sigma_{\lambda\circ\mu}$ (hence $\sigma_\lambda^m=\sigma_{\lambda^m}$ for every $m\in\N$), and
    \item[(b)] $\lambda$ is injective (respectively, surjective) if and only if $\sigma_\lambda$ is surjective (respectively, injective). In particular, $\lambda$ is a bijection if and only if $\sigma_\lambda$ is an automorphism; in this case, $(\sigma_\mu)^{-1}=\sigma_{\mu^{-1}}$.
    \item[(c)] If $x\in K^\Gamma$, then $\supp(\sigma_\lambda^m(x))=\lambda^{-m}(\supp(x))$ for every $m\in\N$, and so
    \item[(d)] $\sigma_\lambda=\sigma_\mu$ if and only if $\lambda=\mu$.
\end{itemize}
\end{proposition}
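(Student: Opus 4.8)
The plan is to verify all four items by direct computations at the level of coordinates, the only substantive inputs being the associativity of function composition (for (a)) and the non-triviality of $K$, which supplies, for each $j\in\Gamma$ and each $a\in K\setminus\{0\}$, an element of $K^\Gamma$ having value $a$ at $j$ and $0$ elsewhere, i.e. an element with support exactly $\{j\}$. For (a) I would fix $x=(x_i)_{i\in\Gamma}$ and read off the $i$-th coordinate of $(\sigma_\lambda\circ\sigma_\mu)(x)$: applying $\sigma_\mu$ produces an element whose $j$-th coordinate is $x_{\mu(j)}$, and then taking its $\lambda(i)$-th coordinate yields $x_{\mu(\lambda(i))}$. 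Since $i\mapsto\mu(\lambda(i))$ is precisely the composite index substitution, this is the $i$-th coordinate of the generalized shift determined by the composite of $\lambda$ and $\mu$, which gives the composition formula. Specializing $\mu=\lambda$ and iterating by induction on $m$ then yields $\sigma_\lambda^m=\sigma_{\lambda^m}$.

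For (b) I would handle the two equivalences with the same support calculus. For surjectivity: if $\lambda$ is injective then, given any target $y=(y_i)$, I define $x$ by $x_{\lambda(i)}=y_i$ on the image of $\lambda$ (well defined by injectivity) and $x_j=0$ for $j$ outside the image, so that $\sigma_\lambda(x)=y$; conversely, if $\lambda(i_1)=\lambda(i_2)$ with $i_1\neq i_2$, then every element in the image of $\sigma_\lambda$ has equal $i_1$- and $i_2$-coordinates, whereas non-triviality of $K$ lets me pick $y$ violating this, so $\sigma_\lambda$ is not onto. For injectivity: $\sigma_\lambda(x)=0$ says $x_j=0$ for every $j$ in the image of $\lambda$, which forces $x=0$ when $\lambda$ is surjective, while if some $j_0\notin\lambda(\Gamma)$ then any element of support $\{j_0\}$ is a non-zero kernel element. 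The ``in particular'' clause combines the two equivalences, and the inverse formula $(\sigma_\mu)^{-1}=\sigma_{\mu^{-1}}$ follows from (a), since both composites of $\sigma_\mu$ and $\sigma_{\mu^{-1}}$ are the shift attached to $\mathrm{id}_\Gamma$, which is $\mathrm{id}_{K^\Gamma}$.

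For (c) I would invoke the power formula from (a): writing $\nu=\lambda^m$, we have $\supp(\sigma_\nu(x))=\{i:x_{\nu(i)}\neq 0\}=\{i:\nu(i)\in\supp(x)\}=\nu^{-1}(\supp(x))$, which is $\lambda^{-m}(\supp(x))$ by definition of the iterated preimage. Finally, for (d) one implication is trivial; for the other, assuming $\sigma_\lambda=\sigma_\mu$, I fix $i_0\in\Gamma$ and apply both shifts to an element of support $\{\lambda(i_0)\}$: its $i_0$-coordinate under $\sigma_\lambda$ is non-zero, hence so is its $i_0$-coordinate under $\sigma_\mu$, and since that element is supported only at $\lambda(i_0)$ this forces $\mu(i_0)=\lambda(i_0)$. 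As $i_0$ is arbitrary, $\lambda=\mu$.

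None of these steps is deep. The only points requiring care are the bookkeeping of the order in which $\lambda$ and $\mu$ are composed in (a), and the systematic use of the hypothesis $K\neq 0$ to manufacture test elements of prescribed support in (b) and (d). I expect the composition-order bookkeeping in (a) to be the easiest place to slip, so I would first pin down the coordinate formula $(\sigma_\lambda\circ\sigma_\mu)(x)_i=x_{\mu(\lambda(i))}$ and only then draw from it the consequences for iterates, supports, inverses, and the faithfulness statement (d).
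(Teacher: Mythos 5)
Your argument is correct and in fact more self-contained than the paper's: for items (a) and (b) the paper simply defers to the references \cite{AADGH} and \cite{AKH}, whereas you supply the coordinate computation and the support/test-element arguments in full; for (c) the paper proves the case $m=1$ and then proceeds by induction, while you reduce the general case to a single application of the $m=1$ computation by first invoking $\sigma_\lambda^m=\sigma_{\lambda^m}$ from (a) and setting $\nu=\lambda^m$ --- a slightly cleaner route to the same identity $\supp(\sigma_{\lambda^m}(x))=(\lambda^m)^{-1}(\supp(x))=\lambda^{-m}(\supp(x))$. Your proof of (d) is essentially the paper's: both apply the two shifts to a test element supported at the single point $\lambda(i_0)$ and compare the results, you coordinatewise and the paper via the support formula (c). Your use of the non-triviality of $K$ to manufacture elements of prescribed support is exactly the mechanism the paper uses in (d) and elsewhere.

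One point you should make explicit rather than gloss over. Your (correct) coordinate formula $(\sigma_\lambda\circ\sigma_\mu)(x)_i=x_{\mu(\lambda(i))}$ identifies $\sigma_\lambda\circ\sigma_\mu$ with $\sigma_{\mu\circ\lambda}$, not with $\sigma_{\lambda\circ\mu}$ as item (a) is literally printed: the assignment $\lambda\mapsto\sigma_\lambda$ is contravariant, since $\sigma_\lambda(x)=x\circ\lambda$ when one views $x$ as a function $\Gamma\to K$. You correctly flag the order bookkeeping as the delicate spot and pin down the right formula, but then conclude that this ``gives the composition formula'' without acknowledging that the composite you have obtained is $\mu\circ\lambda$. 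Nothing downstream is affected --- the iterate formula $\sigma_\lambda^m=\sigma_{\lambda^m}$, the inverse formula $(\sigma_\mu)^{-1}=\sigma_{\mu^{-1}}$, and item (c) only use the cases $\mu=\lambda$ or $\mu=\lambda^{-1}$, where the two orders coincide --- but as written the conclusion of your (a) does not match your own computation, and you should either record the identity in the form $\sigma_\lambda\circ\sigma_\mu=\sigma_{\mu\circ\lambda}$ or note that the statement's $\lambda\circ\mu$ must be read in the opposite (diagrammatic) order.
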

\begin{proof}
For a proof of (a) and (b) see \cite{AADGH} and \cite{AKH}.

\smallskip
(c) If $y=\sigma_\lambda(x)$, then $i\in\supp(y)$ if and only if $y_i=x_{\lambda(i)}\neq0$, that is $\lambda(i)\in \supp(x)$; this is equivalent to $i\in\lambda^{-1}(\supp(x))$, and so $\supp(y)=\lambda^{-1}(\supp(x))$. Proceeding by induction it is possible to prove that $\supp(\sigma_\lambda^m(x))=\lambda^{-m}(\supp(x))$ for every $m\in\N$.

\smallskip
(d) If $\lambda=\mu$, the obviously $\sigma_\lambda=\sigma_\mu$. Assume then that $\sigma_\lambda=\sigma_\mu$. Let $j\in\Gamma$, $i=\lambda(j)$ and $x\in K^\Gamma$ be such that $\supp(x)=\{i\}$. By (c) and by hypothesis $\lambda^{-1}(i)=\supp(\sigma_\lambda(x))=\supp(\sigma_\mu(x))=\mu^{-1}(i)$. Then $j\in\lambda^{-1}(i)=\mu^{-1}(i)$ and in particular $\mu(j)=i=\lambda(j)$.
\end{proof}

Item (a) of next lemma gives a condition on $\lambda$ equivalent to the $\sigma_\lambda$-invariance for the ``rectangular" subgroups of $K^\Gamma$, while item (b) gives a sufficient condition for the algebraic entropy of a generalized shift to be infinite.

\begin{lemma}\label{invariance}
Let $\Gamma$ be a set, $\lambda:\Gamma\to \Gamma$ a function and $K$ a non-trivial finite abelian group. 
\begin{itemize}
\item[(a)] If $\Lambda\subseteq \Gamma$, then $K^\Lambda$ is $\sigma_\lambda$-invariant if and only if $\lambda^{-1}(\Lambda)\subseteq \Lambda$.
If $\Lambda\supseteq \lambda^{-1}(\Lambda)\cup\lambda(\Lambda)$, then $\sigma_\lambda\restriction_{K^\Lambda}=\sigma_{\lambda\restriction_\Lambda}$.
\item[(b)] If $\{\Lambda_i\}_{i\in I}$ is an infinite family of pairwise disjoint $\lambda^{-1}$-invariant subsets of $\Gamma$, and $\ent(\sigma_\lambda\restriction_{K^{\Lambda_i}})>0$ for every $i\in I$, then $\ent(\sigma_\lambda)=\infty$.
\end{itemize}
\end{lemma}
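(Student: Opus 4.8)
For part (a), my plan is to reduce everything to the support formula of Proposition \ref{composition}(c), namely $\supp(\sigma_\lambda(x))=\lambda^{-1}(\supp(x))$. The backward implication of the equivalence is then immediate: if $\lambda^{-1}(\Lambda)\subseteq\Lambda$ and $x\in K^\Lambda$, then $\supp(\sigma_\lambda(x))=\lambda^{-1}(\supp(x))\subseteq\lambda^{-1}(\Lambda)\subseteq\Lambda$, so $\sigma_\lambda(x)\in K^\Lambda$. For the forward implication I would use a single ``test element'': assuming $K^\Lambda$ is $\sigma_\lambda$-invariant, take any $j\in\lambda^{-1}(\Lambda)$, set $i=\lambda(j)\in\Lambda$, and pick $x\in K^\Gamma$ supported exactly on $\{i\}$; then $j\in\lambda^{-1}(\{i\})=\supp(\sigma_\lambda(x))\subseteq\Lambda$, which proves $\lambda^{-1}(\Lambda)\subseteq\Lambda$. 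For the equality of restrictions, note that $\lambda(\Lambda)\subseteq\Lambda$ makes $\lambda\restriction_\Lambda:\Lambda\to\Lambda$ well-defined, while $\lambda^{-1}(\Lambda)\subseteq\Lambda$ gives $\sigma_\lambda$-invariance of $K^\Lambda$ by the equivalence just proved; then I would compare coordinates directly. For $x\in K^\Lambda$ and $i\in\Lambda$ one has $(\sigma_\lambda x)_i=x_{\lambda(i)}=(\sigma_{\lambda\restriction_\Lambda}x)_i$ because $\lambda(i)\in\Lambda$, whereas for $i\notin\Lambda$ both coordinates vanish by invariance, so the two maps agree on $K^\Lambda$.

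For part (b), the plan is to assemble the invariant pieces into a single product subgroup and invoke Proposition \ref{properties}(d). Concretely, set $\Lambda=\bigcup_{i\in I}\Lambda_i$. Since a union of $\lambda^{-1}$-invariant sets is again $\lambda^{-1}$-invariant, part (a) gives that $K^\Lambda$ is $\sigma_\lambda$-invariant, and the monotonicity of the algebraic entropy under passing to invariant subgroups (recorded before Theorem \ref{AT}) yields $\ent(\sigma_\lambda)\geq\ent(\sigma_\lambda\restriction_{K^\Lambda})$. Because the $\Lambda_i$ are pairwise disjoint, the identification of each $K^{\Lambda_i}$ with $\{x:\supp(x)\subseteq\Lambda_i\}$ realizes $K^\Lambda$ as the direct product $\prod_{i\in I}K^{\Lambda_i}$ of the $\sigma_\lambda$-invariant subgroups $K^{\Lambda_i}$. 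Proposition \ref{properties}(d) then gives $\ent(\sigma_\lambda\restriction_{K^\Lambda})\geq\sum_{i\in I}\ent(\sigma_\lambda\restriction_{K^{\Lambda_i}})$, and since each summand is strictly positive while $I$ is infinite, the right-hand side is infinite, forcing $\ent(\sigma_\lambda)=\infty$.

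I do not expect a genuine obstacle here: both parts are essentially bookkeeping once the support formula and the product lower bound are in hand. The only points demanding a little care are the correct choice of the singleton-supported test element in the forward implication of (a), and, in (b), the observation that disjointness of the $\Lambda_i$ is precisely what licenses the internal product decomposition $K^\Lambda=\prod_{i\in I}K^{\Lambda_i}$; without disjointness the supports would overlap and the family would fail to form a direct product, so Proposition \ref{properties}(d) could not be applied.
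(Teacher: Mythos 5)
Your proof of part (a) follows the paper's argument exactly (support formula for the backward implication, a singleton-supported test element for the forward one, coordinatewise comparison for the equality of restrictions), and your overall strategy for part (b) is also the paper's: realize $\prod_{i\in I}K^{\Lambda_i}$ inside $K^\Gamma$ as a product of $\sigma_\lambda$-invariant subgroups and apply Proposition \ref{properties}(d).

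There is, however, one genuine gap in your final step of (b). You conclude from ``each summand is strictly positive while $I$ is infinite'' that $\sum_{i\in I}\ent(\sigma_\lambda\restriction_{K^{\Lambda_i}})=\infty$. For an arbitrary family of strictly positive reals over an infinite index set this is false: the sum, being the supremum of the finite partial sums, can converge (e.g.\ $\sum_{n}2^{-n}$). To force divergence you need a uniform positive lower bound on the summands. The paper supplies exactly this: since the algebraic entropy takes values only in $\{\log n:n\in\N_+\}\cup\{\infty\}$ (a consequence of the results recalled from \cite{DGSZ} and \cite{W}), the hypothesis $\ent(\sigma_\lambda\restriction_{K^{\Lambda_i}})>0$ actually yields $\ent(\sigma_\lambda\restriction_{K^{\Lambda_i}})\geq\log 2$ for every $i\in I$, and then the infinite sum of terms each at least $\log 2$ is indeed infinite. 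Your argument is repaired by inserting this one observation; without it the last inference does not follow.
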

\begin{proof}
(a) Assume that $\lambda^{-1}(\Lambda)\subseteq \Lambda$ and let $x\in K^\Lambda$. Then $\supp(x)\subseteq \Lambda$. By Proposition \ref{G_F}(c) $\supp(\sigma_\lambda(x))=\lambda^{-1}(\supp(x))\subseteq \lambda^{-1}(\Lambda)$. Then $\sigma_\lambda(x)\in K^{\lambda^{-1}(\Lambda)}\subseteq K^\Lambda$. This shows that $\sigma_\lambda(K^\Lambda)\subseteq K^\Lambda$.
Suppose now that $\sigma_\lambda(K^\Lambda)\subseteq K^\Lambda$. Let $i\in\lambda^{-1}(\Lambda)$. Then $a=\lambda(i)\in \Lambda$ and, for $x\in K^\Gamma$ such that $\supp(x)=\{a\}$ we have $x\in K^\Lambda$. By Proposition \ref{G_F}(c) $\supp(\sigma_\lambda(x))=\lambda^{-1}(a)$ and by hypothesis $\sigma_\lambda(x)\in K^\Lambda$, so that $\lambda^{-1}(a)\subseteq \Lambda$; in particular $i\in \Lambda$, and hence $\lambda^{-1}(\Lambda)\subseteq \Lambda$.

If $\lambda^{-1}(\Lambda)\cup\lambda(\Lambda)\subseteq \Lambda$, then it is possible to consider both $\sigma_\lambda\restriction_{K^\Lambda}$ and $\sigma_{\lambda\restriction_\Lambda}$. It is clear that they coincide on $K^\Lambda$.

\smallskip
(b) By hypothesis $K^\Gamma$ contains a subgroup isomorphic to $\prod_{i\in I}K^{\Lambda_i}$. By (a) $K^{\Lambda_i}$ is $\sigma_\lambda$-invariant for every $i\in I$ and by Proposition \ref{properties}(d) $\ent(\sigma_\lambda)\geq\sum_{i\in I}\ent(\sigma_\lambda\restriction_{K^{\Lambda_i}})$. Since by hypothesis $\ent(\sigma_\lambda\restriction_{K^{\Lambda_i}})>0$, it follows that $\ent(\sigma_\lambda\restriction_{K^{\Lambda_i}})\geq\log 2$ (as $\ent(-)$ has as values either $\infty$ or $\log n$ for some $n\in\N_+$). Hence $\ent(\sigma_\lambda)=\infty$.
\end{proof}

The following proposition shows that the quasi-periodicity of a function $\lambda$ is equivalent to the quasi-periodicity of the generalized shift $\sigma_\lambda$.

\begin{proposition}\label{qp}
Let $\Gamma$ be a set, $\lambda:\Gamma\to\Gamma$ a function, $K$ a non-trivial abelian group and $\sigma_\lambda:K^\Gamma\to K^\Gamma$ the generalized shift. The the following conditions are equivalent:
\begin{itemize}
\item[(a)]$\lambda$ is quasi-periodic;
\item[(b)]$\sigma_\lambda$ is quasi-periodic.
\end{itemize}
In case $\lambda$ has finite fibers, also the following condition is equivalent to the previous ones:
\begin{itemize}
\item[(c)]$\sigma_\lambda\restriction_{\bigoplus_\Gamma K}$ is quasi-periodic.
\end{itemize}
\end{proposition}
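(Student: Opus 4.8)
The plan is to reduce everything to the two structural facts already recorded in Proposition~\ref{composition}: that $\sigma_\lambda^m=\sigma_{\lambda^m}$ for every $m\in\N$ (part (a)), and that the assignment $\lambda\mapsto\sigma_\lambda$ is injective, i.e. $\sigma_\lambda=\sigma_\mu$ forces $\lambda=\mu$ (part (d)). These two facts turn quasi-periodicity of $\sigma_\lambda$ into quasi-periodicity of $\lambda$ almost mechanically.

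For the equivalence (a)$\Leftrightarrow$(b) I would argue as follows. By definition $\sigma_\lambda$ is quasi-periodic exactly when $\sigma_\lambda^n=\sigma_\lambda^m$ for some $n<m$ in $\N$. By Proposition~\ref{composition}(a) this equality rewrites as $\sigma_{\lambda^n}=\sigma_{\lambda^m}$, and by Proposition~\ref{composition}(d) the latter holds if and only if $\lambda^n=\lambda^m$, which is precisely quasi-periodicity of $\lambda$. There is no obstacle here: it is a direct translation through the functor $\lambda\mapsto\sigma_\lambda$.

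For the additional clause involving (c), I first observe that the finite-fibers hypothesis is exactly what makes (c) well-posed: by Proposition~\ref{composition}(c) we have $\supp(\sigma_\lambda(x))=\lambda^{-1}(\supp(x))$, which remains finite precisely when $\lambda$ has finite fibers, so that $\bigoplus_\Gamma K$ is $\sigma_\lambda$-invariant. The implication (a)$\Rightarrow$(c) is then immediate, since $\lambda^n=\lambda^m$ gives $\sigma_{\lambda^n}=\sigma_{\lambda^m}$ and hence the two restrictions to $\bigoplus_\Gamma K$ agree. The substance is the converse (c)$\Rightarrow$(a), for which I would imitate the proof of Proposition~\ref{composition}(d) with $\lambda^n$ and $\lambda^m$ in place of $\lambda$ and $\mu$. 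Assume $(\sigma_\lambda\restriction_{\bigoplus_\Gamma K})^n=(\sigma_\lambda\restriction_{\bigoplus_\Gamma K})^m$, equivalently $\sigma_{\lambda^n}\restriction_{\bigoplus_\Gamma K}=\sigma_{\lambda^m}\restriction_{\bigoplus_\Gamma K}$. Fix $j\in\Gamma$, put $i=\lambda^n(j)$, and test on an element $x$ with $\supp(x)=\{i\}$, which is a singleton and hence lies in $\bigoplus_\Gamma K$. The support formula gives $\lambda^{-n}(i)=\supp(\sigma_{\lambda^n}(x))=\supp(\sigma_{\lambda^m}(x))=\lambda^{-m}(i)$; since $j\in\lambda^{-n}(i)$ we conclude $j\in\lambda^{-m}(i)$, that is $\lambda^m(j)=i=\lambda^n(j)$. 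As $j\in\Gamma$ was arbitrary, $\lambda^n=\lambda^m$, so $\lambda$ is quasi-periodic.

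The only real subtlety—the ``main obstacle'', such as it is—is recognizing that passing from the large group $K^\Gamma$ to the much smaller subgroup $\bigoplus_\Gamma K$ loses no information about $\lambda$: elements with singleton support already lie in $\bigoplus_\Gamma K$, and by Proposition~\ref{composition}(c) they detect the fibers of $\lambda^n$ and $\lambda^m$ one point at a time. Once this is noticed, (c)$\Rightarrow$(a) is nothing more than the argument of Proposition~\ref{composition}(d) run on the iterates, and the whole proposition closes up as the cycle (a)$\Leftrightarrow$(b), together with (a)$\Rightarrow$(c)$\Rightarrow$(a) under the finite-fibers assumption.
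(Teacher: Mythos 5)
Your proof is correct and follows essentially the same route as the paper: the equivalence (a)$\Leftrightarrow$(b) via Proposition \ref{composition}(a,d), and for (c) a rerun of the argument of Proposition \ref{composition}(d) on the iterates $\lambda^n,\lambda^m$, using that singleton-support test elements already lie in $\bigoplus_\Gamma K$. You merely spell out in full the step the paper compresses into ``can be proved exactly as (a)$\Leftrightarrow$(b)''.
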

\begin{proof}
(a)$\Leftrightarrow$(b) Assume that $\lambda$ is quasi-periodic, that is, there exist $n< m$ in $\N$ such that $\lambda^n=\lambda^{m}$. By Proposition \ref{composition}(a,d) this is equivalent to $\sigma_\lambda^n=\sigma_{\lambda^n}=\sigma_{\lambda^m}=\sigma_\lambda^m$, that is to say $\sigma_\lambda$ quasi-periodic.

\smallskip
Assume that $\lambda^{-1}(i)$ is finite for every $i\in\Gamma$.
Then (a)$\Leftrightarrow$(c) can be proved exactly as (a)$\Leftrightarrow$(b), observing that for some $n<m$ in $\N$, as in Proposition \ref{composition}(d), $\lambda^n=\lambda^m$ if and only if $\sigma_{\lambda^n}=\sigma_{\lambda^m}$.
\end{proof}

\section{Strings, infinite orbits, ladders and bounded functions}\label{strings}

The main goal of this section is to characterize the bounded functions, proving Theorem \ref{bounded}. To this end we need the notions in the next Definition \ref{sol}.

\smallskip
First we fix some notations. By $\N_+$ we denote the set of positive integers.
Let $X$ be a set and $f:X\to X$ a function. We say that a point $x\in X$ is \emph{periodic} for $f$ if there exists $n\in\N_+$, such that $f^n(x)=x$. The \emph{period} of a periodic point $x\in X$ of $f$ is the minimum positive integer $n$ such that $f^n(x)=x$ (i.e., $n$ is the length of the orbit of $x$). Let $\Per(f)$ be the set of all periodic points and for $n\in\N_+$ let $\Per_n(f)$ be the set of all periodic points of period at most $n$ of $f$ in $X$. The function $f$ is \emph{periodic} if there exists $n\in\N_+$ such that $f^n=id_X$, that is, $\Per(f)=\Per_m(f)$ for some $m\in\N_+$.

\begin{deff}\label{sol}
Let $\Gamma$ be a set and $\lambda:\Gamma\to\Gamma$ a function.
\begin{itemize}
\item[(a)] A \emph{string} of $\lambda$ (in $\Gamma$) is an infinite sequence $S=\{s_t\}_{t\in\N}$ of pairwise distinct elements of $\Gamma$, such that $\lambda(s_t)=s_{t-1}$ for every $t\in\N_+$.
\item[(b)] An \emph{infinite orbit} of $\lambda$ (in $\Gamma$) is an infinite sequence $A=\{a_t\}_{t\in\N}$ of pairwise distinct elements of $\Gamma$, such that $\lambda(a_t)=a_{t+1}$ for every $t\in\N$.
\item[(c)] A \emph{ladder} of $\lambda$ (in $\Gamma$) is a subset $L$ of $\Gamma$ such that $L=\bigcup_{m\in\N}L_m$ is a disjoint union of non-empty finite subsets $L_m=\{l_{m,0},l_{m,1},\ldots,l_{m,b_m}\}$ of $\Gamma\setminus\Per(\lambda)$, with
\begin{itemize}
\item[(i)] $\lambda(l_{m,k})=l_{m,k-1}$ for every $k\in\{1,\ldots,b_m\}$ and $\lambda^{-1}(l_{m,b_m})=\emptyset$.
\item[(ii)] Moreover, $b_s<b_t$ in case $s<t$ in $\N$.
\end{itemize}
\item[(d)] A \emph{periodic ladder} of $\lambda$ (in $\Gamma$) is a subset $P$ of $\Gamma$ such that $P$ has a partition $P=\bigcup_{n\in\N_+} P_n$, where $P_n$ is finite, $|P_n|\geq n$ and $\lambda\restriction_{P_n}:P_n\to P_n$ is a cycle of length $|P_n|$ for every $n\in\N_+$.
\end{itemize}
\end{deff}

Note that by the definition of ladder, $b_m\geq m$, and so $|L_m|>m$, for every $m\in\N$.

\medskip
The following diagram represents a string $S=\{s_t\}_{t\in\N}$, an infinite orbit $A=\{a_t\}_{t\in\N}$ of an element $a_0$, a ladder $L=\bigcup_{m\in\N}\{l_{m,0},l_{m,1},\ldots,l_{m,m}\}$ (i.e., $b_m=m$ for every $m\in\N$), and a periodic ladder $P=\bigcup_{n\in\N_+}P_n$ in case $P_n=\{p_{n,1},\ldots,p_{n,n}\}$ (in particular, $|P_n|=n$) for every $n\in\N_+$.
\begin{equation*}
\xymatrix@-1.3pc{
 \vdots\ar[d] && a_0\ar[d] &&                  &                     &                    &                     & \vdots          &&& & &  & \vdots \\
 s_3\ar[d]    && a_1\ar[d] &&                   &                     &                    &  l_{3,3}\ar[d] & \ldots           &&& & & p_{4,4}\ar[d] & \ldots \\
 s_2\ar[d]    && a_2\ar[d] &&                   &                     & l_{2,2}\ar[d] & l_{3,2}\ar[d] & \ldots           &&&       & p_{3,3}\ar[d] & p_{4,3}\ar[d] & \ldots \\
 s_1\ar[d]    && a_3\ar[d] &&                  & l_{1,1}\ar[d]  & l_{2,1}\ar[d] & l_{3,1}\ar[d] & \ldots           &&& p_{2,2}\ar[d] & p_{3,2}\ar[d] & p_{4,2}\ar[d] & \ldots \\
 s_0\ar[d]    && a_4\ar[d] &&l_{0,0}\ar[d] & l_{1,0}\ar[d] & l_{2,0}\ar[d] & l_{3,0}\ar[d] & \ldots           &&  p_{1,1}\ar@(ur,ul)[] & p_{2,1}\ar@/_0.7pc/[u] & p_{3,1}\ar@/_0.9pc/[uu] & p_{4,1}\ar@/_1.1pc/[uuu] & \ldots\\
 \ldots &  &      \vdots & &       \ldots & \ldots & \ldots & \ldots & \ldots
}
\bigskip
\end{equation*}

Following \cite{AADGH}, a string $S=\{s_t\}_{t\in\N}$ of $\lambda$ in $\Gamma$ is \emph{acyclic} if $\lambda^n(s_0)\not\in S$ for every $n\in\N_+$. Then an acyclic string is an ascending chain in $(\Gamma,\leq_\lambda)$.
It is easy to prove that every string contains an acyclic string.

An infinite orbit $A$ of $\lambda$ is a totally ordered subset of $(\Gamma,\leq_\lambda)$, as well as each $L_m$ in case $L=\bigcup_{m\in\N}L_m$ is a ladder of $\lambda$, since $A$ and the $L_m$'s meet trivially $\Per(\lambda)$. More precisely, $A$ can be viewed also as an infinite descending chain, and each $L_m$ is a finite chain such that its top element is a maximal element in $(\Gamma,\leq_\lambda)$ (so a ladder is disjoint union of finite chains of strictly increasing length and each finite chain ends with a maximal element of $(\Gamma,\leq_\lambda)$). Therefore a surjective $\lambda$ has no ladder.

Finally note that the existence of a periodic ladder $P$ of $\lambda$ in $\Gamma$ is equivalent to the existence of periodic points of arbitrarily large order, that is, to the existence of periodic orbits of arbitrarily large length, i.e., $\Per(\lambda)\supsetneq\Per_n(\lambda)$ for every $n\in\N_+$.

\medskip
We introduce now cardinal invariants that measure respectively the number of pairwise disjoint strings, pairwise disjoint infinite orbits and pairwise disjoint ladders of a function. The first of them was already introduced in \cite{AADGH}.

\begin{deff}\label{sol2}
Let $\Gamma$ be a set and $\lambda:\Gamma\to\Gamma$ a function. Then let:
\begin{itemize}
\item[(a)] $s(\lambda)=\sup\{|\mathcal F|: \mathcal F\ \text{is a family of pairwise disjoint strings in $\Gamma$}\}$;
\item[(b)] $o(\lambda)=\sup\{|\mathcal F|: \mathcal F\ \text{is a family of pairwise disjoint infinite orbits in $\Gamma$}\}$;
\item[(c)] $l(\lambda)=\sup\{|\mathcal F|: \mathcal F\ \text{is a family of pairwise disjoint ladders in $\Gamma$}\}$;
\item[(d)] $p(\lambda)=\sup\{|\mathcal F|: \mathcal F\ \text{is a family of pairwise disjoint periodic ladders in $\Gamma$}\}$.
\end{itemize}
\end{deff}

The existence of a ladder of $\lambda$ in $\Gamma$ is equivalent to the existence of infinitely many pairwise disjoint ladders of $\lambda$ in $\Gamma$; in other words $l(\lambda)>0$ implies $l(\lambda)\geq\omega$. Analogously, $p(\lambda)>0$ yields $p(\lambda)\geq\omega$.

\medskip
The next result gives five equivalent characterizations of a bounded function. In particular, it shows that bounded is the same as quasi-periodic for a function; that a function is bounded if and only if it has no strings, no infinite orbits, no ladders; and the period of the periodic points is bounded by a fixed $N\in\N_+$.

\begin{theorem}\label{bounded}
Let $\Gamma$ be a set and $\lambda:\Gamma\to\Gamma$ a function. The following conditions are equivalent:
\begin{itemize}
\item[(a)]$\lambda$ is bounded;
\item[(b)]$s(\lambda)=o(\lambda)=l(\lambda)=p(\lambda)=0$;
\item[(c)]there exists $N\in\N_+$ such that $\lambda^{-N}(\Gamma\setminus\Per(\lambda))=\emptyset$ and $\Per(\lambda)=\Per_N(\lambda)$;
\item[(d)]there exists $N\in\N_+$ such that $\lambda^{N}(\Gamma)=\Per(\lambda)$ and $\Per(\lambda)=\Per_N(\lambda)$;
\item[(e)]$\lambda$ is quasi-periodic.
\end{itemize}
\end{theorem}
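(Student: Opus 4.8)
The plan is to establish the cycle (a)$\Rightarrow$(b)$\Rightarrow$(d)$\Rightarrow$(a) and to attach (c) and (e) to (d) by short direct arguments. Throughout I would read every notion through the preorder $\leq_\lambda$ and through one numerical invariant of a non-periodic point $x$, its \emph{depth} $d(x)=\min\{k\in\N:\lambda^k(x)\in\Per(\lambda)\}$ (possibly $\infty$), together with the backward structure given by the iterated fibres $\lambda^{-k}(x)$. I first record the bookkeeping facts. Since a periodic point $x$ of period $p$ satisfies $x=\lambda^{kp}(x)$, one always has $\Per(\lambda)\subseteq\lambda^{N}(\Gamma)$ for every $N$; hence $\lambda^{-N}(\Gamma\setminus\Per(\lambda))=\emptyset$ is equivalent to $\lambda^{N}(\Gamma)\subseteq\Per(\lambda)$ and so to $\lambda^N(\Gamma)=\Per(\lambda)$, which yields (c)$\Leftrightarrow$(d) at once. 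For (d)$\Leftrightarrow$(e): if $\lambda^n=\lambda^m$ with $n<m$ then $\lambda^{m-n}$ fixes $\lambda^n(\Gamma)$, so $\lambda^n(\Gamma)\subseteq\Per(\lambda)$ with periods dividing $m-n$, and the reverse inclusion above gives (d); conversely from (d) I take $d=\mathrm{lcm}(1,\dots,N)$ and check $\lambda^{N}=\lambda^{N+d}$, since $\lambda^N(\Gamma)=\Per(\lambda)=\Per_N(\lambda)$ is pointwise fixed by $\lambda^d$.

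The implication (a)$\Rightarrow$(b) is immediate: a string and an infinite orbit are infinite totally preordered subsets, while each level $L_m$ of a ladder and each cycle $P_n$ of a periodic ladder is a totally preordered subset whose size is unbounded in $m$ (resp. $n$); a bound $N$ on totally preordered subsets therefore forces $s(\lambda)=o(\lambda)=l(\lambda)=p(\lambda)=0$. For (d)$\Rightarrow$(a) I would split an arbitrary totally preordered subset $I$ into its periodic and non-periodic parts. Comparable periodic points lie in a common cycle, of length at most $N$ because $\Per(\lambda)=\Per_N(\lambda)$; and on non-periodic points $\leq_\lambda$ is a genuine order along which $d$ is strictly monotone with values in $\{1,\dots,N\}$, since $\lambda^N(\Gamma)=\Per(\lambda)$ forces $d\le N$. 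Hence $|I|\le 2N$ and $\lambda$ is bounded.

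The heart of the matter, and the step I expect to be the main obstacle, is (b)$\Rightarrow$(d). Here $p(\lambda)=0$ gives a uniform bound $\Per(\lambda)=\Per_{N_2}(\lambda)$ on periods, and $o(\lambda)=0$ makes every depth finite; what remains is to bound the depths. Assuming instead that the depths are unbounded, I must produce a string or a ladder, contradicting $s(\lambda)=l(\lambda)=0$. Since $s(\lambda)=0$ means there is no infinite $\leq_\lambda$-ascending chain, every point can be pushed upward, by repeated choice of preimages, to a \emph{maximal} element $v$ (one with $\lambda^{-1}(v)=\emptyset$): the process cannot run forever, for otherwise it would exhibit a string. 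As a maximal element above a point of depth $k$ has depth $\ge k$, maximal elements of arbitrarily large depth exist.

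I would then build a ladder greedily. Having selected pairwise disjoint top-segments $U_1,\dots,U_{n-1}$ of the stalks of maximal elements, of strictly increasing lengths, I set $W=\bigcup_i U_i$ and choose a maximal $v$ with $d(v)$ exceeding $\max_{w\in W}d(w)$ plus the required length. The crucial observation is that any point whose forward orbit meets the finite set $W$ within a bounded number of steps has bounded depth; consequently a sufficiently deep $v$ has its top segment disjoint from $W$, and that segment, capped by the maximal element $v$, serves as the next rung. The delicate point is precisely this disjointness: because the fibres $\lambda^{-1}(\cdot)$ may be infinite, a König-type argument on the backward fibre tree is not available, and I instead control the overlap through the depth function, which is exactly what the above bound provides. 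This produces the forbidden ladder, so the depths are bounded, giving $\lambda^{N_1}(\Gamma)\subseteq\Per(\lambda)$; taking $N=\max\{N_1,N_2\}$ and using $\Per(\lambda)\subseteq\lambda^N(\Gamma)$ yields (d).
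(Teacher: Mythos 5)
Your proposal is correct, and on the decisive implication it takes a genuinely different route from the paper. The paper runs the cycle (a)$\Rightarrow$(b)$\Rightarrow$(c)$\Rightarrow$(a) and declares (c)$\Leftrightarrow$(d)$\Leftrightarrow$(e) obvious; your (a)$\Rightarrow$(b) and (d)$\Rightarrow$(a) coincide with its arguments (split a totally preordered set into its periodic part, which lies in a single cycle of length at most $N$, and its non-periodic part, which injects into $\{1,\dots,N\}$ via a height function), and your explicit derivations of (c)$\Leftrightarrow$(d) from $\Per(\lambda)\subseteq\lambda^N(\Gamma)$ and of (d)$\Rightarrow$(e) via $d=\mathrm{lcm}(1,\dots,N)$ just fill in what the paper leaves unsaid. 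The real divergence is in (b)$\Rightarrow$(c)/(d). The paper negates the \emph{backward} condition and builds each rung of the ladder as the full preimage chain of a point $l_m$ sitting just above $\Per(\lambda)$, relying on the claim that $s(\lambda)=0$ forces every non-periodic $i$ to satisfy $\lambda^{-m}(i)=\emptyset$ for some $m$. That is a K\"onig-type assertion which is not justified when fibres are infinite: attaching to a single non-periodic point pairwise disjoint finite backward tails of unbounded length gives $\lambda^{-m}(i)\neq\emptyset$ for every $m$ with no string anywhere, so the paper's construction cannot even get started on such examples (the ladder then exists, but rooted elsewhere). Your construction sidesteps this exactly as you intend: the forward depth $d(\cdot)$ is finite for every point once $o(\lambda)=0$; a single greedy backward walk from a non-periodic point must terminate at a maximal element because its failure to terminate would itself be a string (no uniformity over all backward paths is needed); and disjointness of the rungs is enforced by taking the next maximal element deeper than $\max_{w\in W}d(w)$ plus the rung length, which also guarantees that the rung avoids $\Per(\lambda)$ and consists of distinct elements. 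So your argument is not merely a variant --- it repairs a step of the paper's proof that fails as written for maps with infinite fibres, at the modest cost of introducing the depth function. Two small points to make explicit when writing it up: the greedy backward walk is guaranteed to reach a maximal element only for \emph{non-periodic} starting points (a periodic point can cycle forever), which is all you use; and the equivalence of $p(\lambda)=0$ with $\Per(\lambda)=\Per_{N_2}(\lambda)$ deserves the one-line justification that distinct cycles are disjoint.
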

\begin{proof}
(a)$\Rightarrow$(b) If either $s(\lambda)>0$, or $o(\lambda)>0$, then there exists either a string or an infinite orbit of $\lambda$ in $\Gamma$, both of which are infinite totally preordered subsets of $(\Gamma,\leq_\lambda)$, and so $\lambda$ is not bounded. If $l(\lambda)>0$, then there exists a ladder $L=\bigcup_{m\in\N}L_m$ of $\lambda$ in $\Gamma$; in particular, for every $m\in\N$ the set $L_m$ is a totally ordered subset of $(\Gamma,\leq_\lambda)$ of size $> m$, and hence $\lambda$ is not bounded. If $p(\lambda)>0$, then there exists a periodic ladder $P=\bigcup_{n\in\N_+}P_n$; each $P_n$ is a totally preordered subset of $(\Gamma,\leq_\lambda)$ of size $\geq n$, and so $\lambda$ is not bounded.

\smallskip
(b)$\Rightarrow$(c) Suppose that for every $n\in\N_+$ there exists $i_n\in\Gamma\setminus\Per(\lambda)$ such that $\lambda^{-n}(i_n)$ is not empty. 
Since $p(\lambda)=0$ is equivalent to $\Per(\lambda)=\Per_N(\lambda)$ for some $N\in\N_+$, and since $s(\lambda)=o(\lambda)=0$, we have to verify that $l(\lambda)>0$. To this end we construct a ladder of $\lambda$ in $\Gamma$.

First note that, given $i\in\Gamma\setminus\Per(\lambda)$, since $s(\lambda)=o(\lambda)=0$, there exist $n,m\in\N_+$ such that $\lambda^n(i)\in\Per(\lambda)$ and $\lambda^{-m}(i)=\emptyset$. So we can suppose without loss of generality (i.e., taking $\lambda^{n-1}(i)$ instead of $i$) that $\lambda(i)\in\Per(\lambda)$ and $\lambda^{-m}(i)=\emptyset$ for some $m\in\N_+$.

So let $l_0\in\Gamma\setminus\Per(\lambda)$ be such that $\lambda(l_0)\in\Per(\lambda)$ and $\lambda^{-b_0-1}(l_0)=\emptyset$ where $b_0$ is the minimum natural number with this property. Then pick $l_{0,b_0}\in\lambda^{-b_0}(l_0)$, let $l_{0,b_0-k}=\lambda^k(l_{0,b_0})$ for every $k\in\{0,\ldots,b_0\}$ (in particular $l_{0,0}=l_0$) and define $$L_0=\{l_{0,0},l_{0,1},\ldots,l_{0,b_0}\}.$$ By our assumption there exists $l_1\in\Gamma\setminus\Per(\lambda)$ such that $\lambda(l_1)\in\Per(\lambda)$ and $\lambda^{-b_0-1}(l_1)\neq\emptyset$. Let $b_1$ be the minimum natural number such that $\lambda^{-b_1-1}(l_1)=\emptyset$; in particular $b_1>b_0$. Pick $l_{1,b_1}\in\lambda^{-b_1}(l_1)$, let $l_{1,b_1-k}=\lambda^k(l_{1,b_1})$ for every $k\in\{0,\ldots,b_1\}$ (in particular $l_{1,0}=l_1$) and define $$L_1=\{l_{1,0},l_{1,1},\ldots,l_{1,b_1}\}.$$  Proceeding by induction in this way, for every $m\in\N$ we have $l_m\in\Gamma\setminus\Per(\lambda)$ such that $\lambda(l_m)\in\Per(\lambda)$ and there exists a minimum natural number $b_m$ such that $\lambda^{-b_m-1}(l_m)=\emptyset$, and $b_m>b_n$ for every $n<m$ in $\N$. Moreover, $$L_m=\{l_{m,0},l_{m,1},\ldots,l_{m,b_m}\},$$ where $l_{m,b_m-k}=\lambda^k(l_{m,b_m})$ for every $k\in\{0,\ldots,b_m\}$ and in particular $l_{m,0}=l_m$.
 
By construction, for each $m\in\N$ we have $L_m\subseteq\Gamma\setminus\Per(\lambda)$, $\lambda^{-1}(l_{m,b_m})=\emptyset$ and $b_s<b_t$ for every $s<t$ in $\N$. Let $L=\bigcup_{m\in\N}L_m$. To prove that this is a ladder it remains to verify that $L_s\cap L_t=\emptyset$ in case $s\neq t$ in $\N$. So let $s\leq t$ in $\N$ and suppose that $L_s\cap L_t$ is not empty. This means that there exist $l_{s,v}\in L_s$ and $l_{t,w}\in L_t$ such that $l_{s,v}=l_{t,w}$. If $v\leq w$, this implies $l_{s,0}=l_{t,w-v}$; since $\lambda(l_{t,w-v})=\lambda(l_{s,0})\in\Per(\lambda)$, this equality is possible only if $w-v=0$, that is $v=w$. The same conclusion holds assuming $w\leq v$. Then $l_{s,0}=l_{t,0}$. Since $\lambda^{-b_s-1}(l_{t,0})=\lambda^{-b_s-1}(l_{s,0})=\emptyset$, it follows that $t\leq s$. Hence $s=t$, and $L_s=L_t$.

\smallskip
(c)$\Rightarrow$(a) Assume that there exists $N\in\N_+$ such that $\lambda^{-N}(\Gamma\setminus\Per(\lambda))=\emptyset$ and $\Per(\lambda)=\Per_N(\lambda)$. Let $I$ be a totally preordered subset of $(\Gamma,\leq_\lambda)$. Since $I\setminus\Per(\lambda)$ is totally ordered and $\lambda^{-N}(I\setminus\Per(\lambda))=\emptyset$, so $I\setminus\Per(\lambda)$ has size at most $N$; moreover, $I\cap\Per(\lambda)$, being totally preordered, is contained in an orbit of size at most $N$. It follows that $|I|\leq 2 N$. This proves that $\lambda$ is bounded.

\smallskip
(c)$\Leftrightarrow$(d)$\Leftrightarrow$(e) are obvious.



\end{proof}

\section{Quasi-periodicity coincides with local quasi-pe\-rio\-di\-ci\-ty for the generalized shifts}\label{qp=lqp}

In this section we prove in Theorem \ref{pre-mt} the equivalence of local quasi-periodicity and quasi-periodicity for a generalized shift $\sigma_\lambda$, and that these conditions are equivalent also to the boundedness of $\lambda$ (i.e., the quasi-periodicity of $\lambda$ in view of Theorem \ref{bounded}).
The non-trivial part in this proof is to find a non-quasi-periodic point of $\sigma_\lambda$ under the assumption that $\lambda$ is not bounded.

\medskip
For a set $\Gamma$ and an abelian group $K$, the \emph{diagonal} subgroup $\Delta K^\Gamma$ of $K^\Gamma$ is $\Delta K^\Gamma=\{x=(x_i)_{i\in\Gamma}:\text{for some}\ a\in K,\ x_i=a\ \text{for every}\ i\in\Gamma\}$.

\begin{lemma}\label{uff}
Let $\Gamma$ be a set, $\lambda:\Gamma\to \Gamma$ a function, $K$ a non-trivial abelian group, and $\sigma_\lambda:K^\Gamma\to K^\Gamma$ the generalized shift.
\begin{itemize}
\item[(a)]If $\lambda$ has a ladder $L$ in $\Gamma$, then there exists $x\in K^L$ which is not quasi-periodic for $\sigma_\lambda$. So $\ent(\sigma_\lambda)>0$.
\item[(b)]if $\lambda$ has a periodic ladder $P$ in $\Gamma$, then there exists $x\in K^P$ which is not quasi-periodic for $\sigma_\lambda$. So $\ent(\sigma_\lambda)>0$.
\end{itemize}
\end{lemma}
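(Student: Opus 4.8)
The plan is to prove both statements by exhibiting an explicit point $x$ supported on the given subset and detecting its non-quasi-periodicity from a single coordinate of the iterates, using $\sigma_\lambda^s(x)_i=x_{\lambda^s(i)}$ (Proposition \ref{composition}(a)). Once such an $x$ is produced, $\sigma_\lambda$ fails to be locally quasi-periodic, so $\ent(\sigma_\lambda)>0$ by Proposition \ref{properties}(a); to remain inside the torsion part one fixes the nonzero value $a\in K$ of finite order, which is harmless in our applications where $K$ is finite. Thus the entire content is the construction of a non-quasi-periodic $x$ in each case.

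For part (a), I would let $x\in K^L$ take the value $a$ at every \emph{bottom} element $l_{m,0}$ and $0$ elsewhere. The decisive observation is that for a \emph{top} element $p=l_{c,b_c}$ one has $\lambda^s(p)=l_{c,b_c-s}$ for $0\le s\le b_c$, so $\sigma_\lambda^s(x)_p=x_{l_{c,b_c-s}}$ equals $a$ exactly when $s=b_c$ and equals $0$ for $0\le s<b_c$. Assuming $x$ is quasi-periodic, say $\sigma_\lambda^n(x)=\sigma_\lambda^m(x)$ with $d=m-n\ge1$, I would use $b_m\ge m$ (so the $b_m$ are unbounded) to pick a chain $c$ with $b_c\ge m$; applying $\sigma_\lambda^{b_c-m}$ to the assumed equality and evaluating at $p=l_{c,b_c}$ gives $a=\sigma_\lambda^{b_c}(x)_p=\sigma_\lambda^{b_c-d}(x)_p=x_{l_{c,d}}=0$ (as $1\le d\le b_c$ means $l_{c,d}$ is not a bottom), a contradiction.

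For part (b), I would choose one point $q_n\in P_n$ in each cycle and let $x\in K^P$ be $a$ at every $q_n$ and $0$ elsewhere. Since $P_n$ is $\lambda$-invariant and $\lambda\restriction_{P_n}$ is a cycle of length $c_n=|P_n|\ge n$, the restriction $\sigma_\lambda^s(x)\restriction_{P_n}$ is simply the single marker $q_n$ rotated, whence $\sigma_\lambda^s(x)\restriction_{P_n}=\sigma_\lambda^{s'}(x)\restriction_{P_n}$ if and only if $s\equiv s'\pmod{c_n}$. Consequently $\sigma_\lambda^n(x)=\sigma_\lambda^m(x)$ would force $c_n\mid(m-n)$ for \emph{every} $n$; fixing $d=m-n\ge1$ and choosing $n$ with $c_n\ge n>d$ yields $0<d<c_n$, so $c_n\nmid d$, a contradiction, and $x$ is not quasi-periodic.

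The part I expect to be delicate is (a): the literal Definition \ref{sol}(c) of a ladder does not forbid a bottom $l_{m,0}$ from being mapped by $\lambda$ back into $L$, which could contaminate the supports of $\sigma_\lambda^s(x)$ and break a naive ``the supports at distinct times are disjoint'' argument. My plan circumvents this entirely by testing quasi-periodicity only through the top coordinate $l_{c,b_c}$ over the range $s\le b_c$, where the trajectory provably stays inside a single chain and never reaches the point $\lambda(l_{c,0})$. By contrast, part (b) presents no such structural subtlety: the obstruction there is purely arithmetic — unbounded cycle lengths $c_n\ge n$ admit no common nonzero period — so the only thing to verify carefully is that the marker's restriction to each $P_n$ is genuinely $c_n$-periodic.
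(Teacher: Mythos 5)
Your proposal is correct and follows essentially the same route as the paper: the same witness $x$ in each case (the constant value $a$ on the bottoms $l_{m,0}$ in (a), one marker per cycle in (b)), with non-quasi-periodicity certified in (a) by forward evaluation $\sigma_\lambda^{s}(x)_p=x_{\lambda^{s}(p)}$ along a single sufficiently long chain rather than, as in the paper, by computing $\supp(\sigma_\lambda^{s}(x))=\lambda^{-s}(B)$ and intersecting with $L$. Your single-coordinate test is in fact slightly more robust, since it sidesteps the (ultimately harmless) possibility you flag that $\lambda(l_{m,0})$ re-enters $L$; note only that in both your argument and the paper's, the final step ``so $\ent(\sigma_\lambda)>0$'' rests on Proposition \ref{properties}(a) and hence tacitly requires a nonzero torsion element of $K$, which is available in all applications since there $K$ is finite.
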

\begin{proof}
(a) Let $L=\bigcup_{m\in\N}L_m\subseteq \Gamma\setminus\Per(\lambda)$, where each $L_m=\{l_{m,0},\ldots,l_{m, b_m}\}$. Consider where $B=\{l_{m,0}:m\in \N\}$ and let $x$ be a non-zero element of $\Delta K^B$. We show that $x$ is not quasi-periodic for $\sigma_\lambda$. To this end let $s<t$ in $\N$.
By Proposition \ref{G_F}(c) $\supp(\sigma_\lambda^s(x))={\lambda^{-s}(B)}$ and $\supp(\sigma_\lambda^t(x))={\lambda^{-t}(B)}$. Then $\supp(\sigma_\lambda^s(x))\cap L=\{l_{m,s}:m\in\N,m\geq s\}$ and $\supp(\sigma_\lambda^t(x))\cap L=\{l_{m,t}:m\in\N,m\geq t\}$. By the definition of ladder the latter two sets have trivial intersection as $s<t$. So, since $\sigma_\lambda^s(x)\in\Delta K^{\supp(\sigma_\lambda^s(x))}$ and $\sigma_\lambda^t(x)\in\Delta K^{\supp(\sigma_\lambda^t(x))}$, it follows that $\sigma_\lambda^s(x)\neq\sigma_\lambda^t(x)$. This proves that $x$ is not quasi-periodic.

\smallskip
(b) Let $P=\bigcup_{n\in\N_+}P_n$. By definition, for every $n\in\N_+$ we have $P_n\supseteq\lambda^{-1}(P_n)\cup\lambda(P_n)$. By Lemma \ref{invariance}(a) $\sigma_\lambda\restriction_{K^{P_n}}=\sigma_{\lambda_n}$, where $\lambda_n=\lambda\restriction_{P_n}$ for every $n\in\N_+$. Moreover, $K^P\cong\prod_{n\in\N_+}K^{P_n}$, and so $\sigma_\lambda\restriction_{K^P}=(\sigma_{\lambda_n})_{n\in\N_+}$.

For every $n\in \N_+$ let $x_n=(x_{n,s})_{s\in P_n}\in K^{P_n}$ be such that $x_{n,s_n}\neq 0$ for one and only one $s_n\in P_n$, that is, $\supp(x_n)=\{s_n\}$.
Let $x=(x_n)_{n\in \N_+}\in \prod_{n\in \N_+}K^{P_n}$. We show that $x$ is not quasi-periodic for $\sigma_\lambda$. 
Let $s<t$ in $\N$. We have to verify that $\sigma_\lambda^s(x)\neq\sigma_\lambda^t(x)$. Since $\sigma_\lambda^s(x)=(\sigma_{\lambda_n}^s(x_n))_{n\in\N_+}$ and $\sigma_\lambda^t(x)=(\sigma_{\lambda_n}^t(x_n))_{n\in\N_+}$, it suffices to show that there exists $n\in\N_+$ such that $\sigma_{\lambda_n}^s(x_n)\neq\sigma_{\lambda_n}^t(x_n)$. Take for example $n\in\N_+$ such that $|P_n|>t$. Then $\lambda_n^{-s}(s_n)\neq \lambda_n^{-t}(s_n)$ in view of the hypothesis that $\lambda_n$ is a cycle of length $|P_n|>t>s$, and by Proposition \ref{G_F}(c) $\supp(\sigma_{\lambda_n}^s(x_n))=\{\lambda_n^{-s}(s_n)\}\neq\{\lambda^{-t}(s_n)\}=\supp(\sigma_{\lambda_n}^t(s_n))$. Hence $\sigma_\lambda^s(x)\neq\sigma_\lambda^t(x)$.

\smallskip
In both (a) and (b) the existence of a non-quasi-periodic point of $\sigma_\lambda$ implies $\ent(\sigma_\lambda)>0$ in view of Proposition \ref{properties}(a).
\end{proof}

\begin{theorem}\label{pre-mt}
Let $\Gamma$ be a set, $\lambda:\Gamma\to \Gamma$ a function, $K$ a non-trivial abelian group, and $\sigma_\lambda:K^\Gamma\to K^\Gamma$ the generalized shift. The following conditions are equivalent:
\begin{itemize}
\item[(a)] $\lambda$ is bounded;
\item[(b)] $\sigma_\lambda$ is quasi-periodic;
\item[(c)] $\sigma_\lambda$ is locally quasi-periodic.
\end{itemize}
\end{theorem}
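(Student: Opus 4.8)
The plan is to close the chain of implications by leveraging results already in hand for the two easy directions and concentrating effort on the one substantive direction. First, (a)$\Leftrightarrow$(b) is immediate: Theorem \ref{bounded} gives that $\lambda$ is bounded if and only if $\lambda$ is quasi-periodic, and Proposition \ref{qp} gives that $\lambda$ is quasi-periodic if and only if $\sigma_\lambda$ is quasi-periodic. The implication (b)$\Rightarrow$(c) is trivial, since $\sigma_\lambda^n=\sigma_\lambda^m$ for some $n<m$ forces $\sigma_\lambda^n(x)=\sigma_\lambda^m(x)$ for every $x\in K^\Gamma$. Thus the entire content of the theorem lies in (c)$\Rightarrow$(a), which I would prove by contraposition: assuming $\lambda$ is not bounded, I would exhibit a point of $K^\Gamma$ that is not quasi-periodic for $\sigma_\lambda$.

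To organize the contrapositive I would invoke Theorem \ref{bounded} in the form (a)$\Leftrightarrow$(b): if $\lambda$ is not bounded, then at least one of $s(\lambda),o(\lambda),l(\lambda),p(\lambda)$ is positive, so $\lambda$ admits a string, an infinite orbit, a ladder, or a periodic ladder. The ladder and periodic-ladder cases are already settled by Lemma \ref{uff}(a) and (b), which directly produce non-quasi-periodic points (only the first assertion of each item is needed, not the entropy conclusion, so no torsion hypothesis on $K$ is invoked). It remains to handle strings and infinite orbits by explicit constructions.

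For a string $S=\{s_t\}_{t\in\N}$, where $\lambda(s_t)=s_{t-1}$, I would take $x\in K^\Gamma$ with $\supp(x)=\{s_0\}$ and nonzero value $a$ at $s_0$. Since $\lambda^m(s_m)=s_0$, evaluating at $s_m$ via Proposition \ref{G_F}(c) gives $(\sigma_\lambda^m(x))_{s_m}=x_{s_0}=a$, whereas for any $n<m$ we have $\lambda^n(s_m)=s_{m-n}\neq s_0$ (the $s_t$ being pairwise distinct and $m-n\geq 1$), so $(\sigma_\lambda^n(x))_{s_m}=0$; hence $\sigma_\lambda^n(x)\neq\sigma_\lambda^m(x)$ for all $n<m$ and $x$ is not quasi-periodic. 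For an infinite orbit $A=\{a_t\}_{t\in\N}$, where $\lambda(a_t)=a_{t+1}$ and thus $\lambda^m(a_t)=a_{t+m}$, a single point will not do, since the forward images of a point supported at $a_0$ move backward and vanish on the orbit. Instead I would fix $a\in K\setminus\{0\}$ and define $x$ supported on $A$ by $x_{a_t}=a$ when $t$ is a perfect square and $x_{a_t}=0$ otherwise (and $x=0$ off $A$). By Proposition \ref{G_F}(c), $(\sigma_\lambda^m(x))_{a_t}=x_{a_{t+m}}$, so $\sigma_\lambda^n(x)=\sigma_\lambda^m(x)$ would force the $\{0,a\}$-valued sequence $t\mapsto x_{a_t}$ to satisfy $x_{a_s}=x_{a_{s+(m-n)}}$ for all $s\geq n$, i.e.\ to be eventually periodic; but the gaps between consecutive perfect squares tend to infinity, so this sequence is not eventually periodic and no such $n<m$ exists. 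Hence $x$ is not quasi-periodic, completing (c)$\Rightarrow$(a); combined with (a)$\Leftrightarrow$(b) and (b)$\Rightarrow$(c) this closes the equivalence.

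The main obstacle is the (c)$\Rightarrow$(a) direction, and within it the infinite-orbit case. The string case detects the single marked value at the spreading preimages $s_m$, but along an infinite orbit the support of a finitely supported point collapses; one therefore genuinely needs the full product $K^\Gamma$ to spread a non-eventually-periodic pattern along the orbit. This is precisely the phenomenon distinguishing $K^\Gamma$ from the direct-sum setting of \cite{AADGH}, and identifying the right pattern (any aperiodic $\{0,a\}$-sequence, e.g.\ the indicator of the squares) is the crux of the argument.
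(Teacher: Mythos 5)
Your proposal is correct, and its overall architecture coincides with the paper's: (a)$\Leftrightarrow$(b) via Theorem \ref{bounded} and Proposition \ref{qp}, (b)$\Rightarrow$(c) trivially, and (c)$\Rightarrow$(a) by contraposition, splitting into the four cases $s(\lambda)>0$, $o(\lambda)>0$, $l(\lambda)>0$, $p(\lambda)>0$ and delegating the last two to Lemma \ref{uff}. The only divergence is in the explicit non-quasi-periodic witnesses for the first two cases. For an infinite orbit the paper takes a non-zero element of $\Delta K^{A_{1,0}}$ with $A_{1,0}=\{a_n:n\in N_1\}$, i.e.\ the indicator of the factorials along the orbit, and distinguishes iterates by their supports; your indicator of the perfect squares is the same idea (any $\{0,a\}$-pattern whose gaps tend to infinity, hence is not eventually periodic, works), just phrased coordinatewise rather than via supports. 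For a string your witness is genuinely simpler than the paper's: a single point $x$ with $\supp(x)=\{s_0\}$ suffices, since $(\sigma_\lambda^m(x))_{s_m}=x_{s_0}\neq 0$ while $(\sigma_\lambda^n(x))_{s_m}=x_{s_{m-n}}=0$ for $n<m$; the paper instead reuses the factorial set $S_{1,0}=\{s_{n!}\}$ (and tacitly an acyclic string) presumably for uniformity with the $N_m$ machinery of Section \ref{disjoint}, which is needed later for the infinite-entropy computation but is overkill here. Your observation that only the first assertions of Lemma \ref{uff}(a,b) are used, so no finiteness or torsion hypothesis on $K$ is needed, is also accurate.
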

\begin{proof}
(a)$\Leftrightarrow$(b) By Theorem \ref{bounded} $\lambda$ is bounded if and only if $\lambda$ is quasi-periodic. Then apply Proposition \ref{qp} to conclude that $\lambda$ quasi-periodic is equivalent to $\sigma_\lambda$ quasi-periodic.

\smallskip
(b)$\Rightarrow$(c) is obvious.

\smallskip
(c)$\Rightarrow$(a) We verify that in case $\lambda$ is not bounded, then $\sigma_\lambda$ is not locally quasi-periodic, that is, there exists $x\in K^\Gamma$ which is not quasi-periodic. By Theorem \ref{bounded} $\lambda$ non-bounded means that one of $s(\lambda)$, $o(\lambda)$, $l(\lambda)$, $p(\lambda)$ is non-zero.

Let $$N_1=\{n!:n\in\N_+\}\subseteq\N$$ and for every $k\in\N$ let 
$$N_1+k=\{n+k:n\in N_1\}\ \text{and}\ N_1-k=\{n-k:n\in N_1,n>k\}.$$

Suppose that $s(\lambda)>0$. Then there exists a string $S=\{s_t\}_{t\in\N}$ of $\lambda$ in $\Gamma$.
For $k\in\N$ define $$S_{1,k}=\{s_n:n\in N_1+k\}\subseteq S.$$
Let $x$ be a non-zero element of $\Delta K^{S_{1,0}}$. We verify that $x$ is not quasi-periodic for $\sigma_\lambda$. To this aim, let $s<t$ in $\N$. By Proposition \ref{G_F}(c) $\supp(\sigma_\lambda^s(x))=\lambda^{-s}(S_{1,0})$ and $\supp(\sigma_\lambda^t(x))=\lambda^{-t}(S_{1,0})$. Then $\supp(\sigma_\lambda^s(x))\cap S=S_{1,s}$ and $\supp(\sigma_\lambda^t(x))\cap S=S_{1,t}$. In particular, $S_{1,s}\neq S_{1,t}$ because $N_1+s\neq N_1+t$, and so $\supp(\sigma_\lambda^s(x))\neq \supp(\sigma_\lambda^t(x))$. Since $\sigma_\lambda^s(x)$ and $\sigma_\lambda^t(x)$ are elements respectively of $\Delta K^{\supp(\sigma_\lambda^s(x))}$ and $\Delta K^{\supp(\sigma_\lambda^t(x))}$, it follows that $\sigma_\lambda^s(x)\neq\sigma_\lambda^t(x)$.

Suppose that $o(\lambda)>0$. Then there exists an infinite orbit $A=\{a_t\}_{t\in\N}$ of $\lambda$ in $\Gamma$. 
For $k\in\N$ define $$A_{1,k}=\{a_n:n\in N_1-k\}\subseteq A.$$
Let $x$ be a non-zero element of $\Delta K^{A_{1,0}}$. We verify that $x$ is not quasi-periodic for $\sigma_\lambda$. To this aim, let $s<t$ in $\N$. By Proposition \ref{G_F}(c) $\supp(\sigma_\lambda^s(x))=\lambda^{-s}(A_{1,0})$ and $\supp(\sigma_\lambda^t(x))=\lambda^{-t}(A_{1,0})$. Then $\supp(\sigma_\lambda^s(x))\cap A=A_{1,s}$ and $\supp(\sigma_\lambda^t(x))\cap A=A_{1,t}$. In particular, $A_{1,s}\neq A_{1,t}$ because $N_1-s\neq N_1-t$, and so $\supp(\sigma_\lambda^s(x))\neq \supp(\sigma_\lambda^t(x))$. Since $\sigma_\lambda^s(x)$ and $\sigma_\lambda^t(x)$ are elements respectively of $\Delta K^{\supp(\sigma_\lambda^s(x))}$ and $\Delta K^{\supp(\sigma_\lambda^t(x))}$, it follows that $\sigma_\lambda^s(x)\neq\sigma_\lambda^t(x)$.

If $l(\lambda)>0$, apply Lemma \ref{uff}(a), and if $p(\lambda)>0$, apply Lemma \ref{uff}(b).
\end{proof}

According to \cite{DGSZ}, a function $f:X\to X$ is \emph{strongly recurrent} if it is locally periodic.
In \cite{DGSZ} an analogue of the Poincar\'e -- Birkhoff recurrence theorem of ergodic theory was proved: 
\begin{quote}
\emph{For $\phi$ a monomorphism of a torsion abelian group,  $\phi$ is locally periodic (i.e., strongly recurrent) if and only if $\ent(\phi)=0$.}
\end{quote}
Similarly to the situation in Theorem \ref{mt}, for injective generalized shifts $\sigma_\lambda$ the ``local" condition becomes ``global":

\begin{corollary}\label{recurrence}
Let $\Gamma$ be a set, $\lambda:\Gamma\to\Gamma$ a function, $K$ a non-trivial finite abelian group and $\sigma_\lambda:K^\Gamma\to K^\Gamma$ an injective generalized shift. Then the following conditions are equivalent:
\begin{itemize}
\item[(a)]$\sigma_\lambda$ is locally periodic (i.e., strongly recurrent);
\item[(b)]$\sigma_\lambda$ is periodic;
\item[(c)]$\ent(\sigma_\lambda)=0$.
\end{itemize}
\end{corollary}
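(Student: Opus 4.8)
The plan is to establish the equivalence (a)$\Leftrightarrow$(c) directly from the quoted recurrence theorem of \cite{DGSZ}, to dispose of (b)$\Rightarrow$(a) as trivial, and to devote the real work to the implication (a)$\Rightarrow$(b); these three facts together yield all the equivalences.

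First I would note that, since $K$ is finite, $K^\Gamma$ is a (bounded) torsion abelian group, and by hypothesis $\sigma_\lambda$ is a monomorphism. Hence the Poincar\'e--Birkhoff-type recurrence theorem recalled just above the statement applies verbatim to $\phi=\sigma_\lambda$ and gives (a)$\Leftrightarrow$(c), that is, $\sigma_\lambda$ is locally periodic if and only if $\ent(\sigma_\lambda)=0$. The implication (b)$\Rightarrow$(a) is immediate, since a periodic map is in particular locally periodic.

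The heart of the matter is (a)$\Rightarrow$(b), i.e.\ promoting ``every point is periodic'' to ``$\sigma_\lambda$ is globally periodic''. Here I would first translate the dynamical hypothesis into a combinatorial one about $\lambda$: assuming (a), the equivalence (a)$\Leftrightarrow$(c) already obtained gives $\ent(\sigma_\lambda)=0$, whence $\lambda$ is bounded by Theorem \ref{mt}. Boundedness, in the form of Theorem \ref{bounded}(d), supplies an $N\in\N_+$ with $\lambda^N(\Gamma)=\Per(\lambda)$ and $\Per(\lambda)=\Per_N(\lambda)$. The decisive extra ingredient is injectivity: by Proposition \ref{composition}(b), $\sigma_\lambda$ injective forces $\lambda$ surjective, so $\lambda^N(\Gamma)=\Gamma$. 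Combining the two descriptions of $\lambda^N(\Gamma)$ collapses $\Gamma$ onto its set of periodic points, $\Gamma=\Per(\lambda)=\Per_N(\lambda)$; thus every $i\in\Gamma$ is periodic of period at most $N$. Setting $M=\mathrm{lcm}\{1,\dots,N\}$ one gets $\lambda^M=\mathrm{id}_\Gamma$, and then $\sigma_\lambda^M=\sigma_{\lambda^M}=\sigma_{\mathrm{id}_\Gamma}=\mathrm{id}_{K^\Gamma}$ by Proposition \ref{composition}(a), so $\sigma_\lambda$ is periodic.

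I expect the only non-routine point to be precisely this collapse: surjectivity of $\lambda$ (equivalently, injectivity of $\sigma_\lambda$) is what upgrades the bounded ``reach into the non-periodic part'' guaranteed by boundedness into the assertion that there is no non-periodic part at all, after which the uniform bound $N$ on periods turns the many local periods into a single global period $M$. The torsionness of $K^\Gamma$, the appeal to Theorem \ref{bounded}(d), and the elementary least-common-multiple argument are all routine, so no serious further obstacle is anticipated.
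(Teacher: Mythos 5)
Your proposal is correct, and its skeleton coincides with the paper's: both get (a)$\Leftrightarrow$(c) from the recurrence theorem of \cite{DGSZ}, dismiss (b)$\Rightarrow$(a) as trivial, and concentrate on the remaining implication. Where you diverge is in how that implication is closed. The paper argues entirely at the level of $\sigma_\lambda$: from $\ent(\sigma_\lambda)=0$ it gets local quasi-periodicity by Proposition \ref{properties}(a), upgrades this to quasi-periodicity of $\sigma_\lambda$ by Theorem \ref{pre-mt}, and then uses injectivity of $\sigma_\lambda$ to cancel, turning $\sigma_\lambda^n=\sigma_\lambda^m$ into $\sigma_\lambda^{m-n}=\mathrm{id}$. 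You instead descend to the index map: $\ent(\sigma_\lambda)=0$ gives boundedness of $\lambda$ via Theorem \ref{mt}, Theorem \ref{bounded}(d) gives $\lambda^N(\Gamma)=\Per(\lambda)=\Per_N(\lambda)$, and injectivity of $\sigma_\lambda$ translates (Proposition \ref{composition}(b)) into surjectivity of $\lambda$, collapsing $\Gamma$ onto $\Per_N(\lambda)$ and yielding the explicit period $M=\mathrm{lcm}\{1,\dots,N\}$ with $\lambda^M=\mathrm{id}_\Gamma$, hence $\sigma_\lambda^M=\mathrm{id}$. Both routes lean on the same underlying equivalence chain (bounded $\Leftrightarrow$ quasi-periodic $\Leftrightarrow$ entropy zero); the paper's cancellation is shorter, while your version buys a concrete bound on the period and the slightly stronger conclusion that $\lambda$ itself (not just $\sigma_\lambda$) is periodic. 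One small economy you could make: you do not need to route through Theorem \ref{mt} (whose hard direction is the main theorem of the paper); Proposition \ref{properties}(a) plus Theorem \ref{pre-mt} already give boundedness of $\lambda$ from $\ent(\sigma_\lambda)=0$, which is all your argument uses.
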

\begin{proof}
(a)$\Leftrightarrow$(c) was proved in \cite{DGSZ}, and (b)$\Rightarrow$(a) is clear.

\smallskip
(c)$\Rightarrow$(b) Assume that $\ent(\sigma_\lambda)=0$. By Proposition \ref{properties}(a) $\sigma_\lambda$ is locally quasi-periodic and by Theorem \ref{pre-mt} $\sigma_\lambda$ is quasi-periodic. Since it is injective, $\sigma_\lambda$ is periodic.
\end{proof}

\section{Independent subgroups of $K^\N$}\label{disjoint}

Let us give the following definition, which will help in explaining the content of this section.

\begin{deff}
Let $G$ be an abelian group. A family $\{H_i:i\in I\}$ of subgroups of $G$ is \emph{independent} if for any finite subset $J=\{j_1,\ldots,j_n\}$ of $I$ and any $j_0\in I\setminus J$ then $H_{j_0}\cap(H_{j_1}+\ldots+ H_{j_n})=\{0\}$.
\end{deff}

In particular, the $H_i$'s in this definition are pairwise with trivial intersection. Observe that a family $\{H_n:n\in\N\}$ of subgroups of $G$ is independent if and only if $H_{n+1}\cap (H_0+\ldots+H_n)=\{0\}$ for every $n\in\N$.

\medskip
The subsets $N_1+k$ and $N_1-k$ of $\N$ in the proof of Theorem \ref{pre-mt} help in finding a non-quasi-periodic point of $\sigma_\lambda$ when $\lambda$ admits either a string or an infinite orbit. By Proposition \ref{properties}(a) this is equivalent to say that $\ent(\sigma_\lambda)>0$. But to prove Theorem \ref{mt} we have to show that this entropy is infinite and so we have to improve the use of the subsets $N_1+k$ and $N_1-k$ of $\N$. 

With this aim, we consider in this section similar subsets of $\N$ defined through the use of the factorial of natural numbers. The properties of these subsets help in finding in Lemmas \ref{pseudo-eq1-lemma} and \ref{pseudo-eq2-lemma} specific independent families of finite subgroups of $K^\N$. These subgroups are ``sufficiently many" with respect to the calculation of the algebraic entropy of a generalized shift $\sigma_\lambda$ and are useful to prove Lemma \ref{asl>0->ent=infty}, in which we see that the algebraic entropy of $\sigma_\lambda$ is infinite in case $\lambda$ admits either a string or an infinite orbit.

\medskip
For every $m,n\in\N_+$, let 
\begin{center}
$n!^{(m)}=n\underbrace{!\ldots!}_m$ and $N_m=\{n!^{(m)}:n\in\N_+\}$.
\end{center}
These subsets of $\N$ form a (rapidly) strictly decreasing sequence $$N_1\supset N_2\supset\ldots\supset N_m\supset N_{m+1}\supset\ldots;$$ indeed, $N_m\setminus N_{m+1}$ is infinite for every $m\in\N_+$. For $m\in\N_+$ and $k\in\N$ let 
\begin{equation}\label{N_m}
N_m+k=\{n+k:n\in N_m\}\ \text{and}\ N_m-k=\{n-k:n\in N_m,\ n>k\}.
\end{equation}
We collect here some useful properties of these subsets $N_m$ of $\N$.

\begin{lemma}\label{subsetneq}
For every $m,k\in\N_+$,
\begin{itemize}
\item[(a)] $(N_m+k)\setminus (N_{m+1}+k)\not\subseteq N_1\cup (N_1+1)\cup\ldots\cup (N_1+(k-1))$, and
\item[(b)] $(N_m-k)\setminus (N_{m+1}-k)\not\subseteq N_1\cup (N_1-1)\cup\ldots\cup (N_1-(k-1))$.
\end{itemize}
\end{lemma}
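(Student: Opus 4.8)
The plan is to produce, for each fixed $m,k\in\N_+$, an explicit witness lying in the left-hand set but in none of the shifted copies of $N_1$ on the right; for both parts the witness will be a suitable shift of an element $M=p!^{(m)}$ with $p\notin N_1$ chosen large.

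First I would record the structure of $N_m\setminus N_{m+1}$. Peeling off the innermost factorial gives $n!^{(m+1)}=(n!)!^{(m)}$, so that $N_{m+1}=\{p!^{(m)}:p\in N_1\}$ while $N_m=\{p!^{(m)}:p\in\N_+\}$. Since $p\mapsto p!^{(m)}$ is strictly increasing on $\N_+$, hence injective, this yields $$N_m\setminus N_{m+1}=\{p!^{(m)}:p\in\N_+\setminus N_1\}.$$ I would also note that every such $M=p!^{(m)}$ is itself a factorial, namely $M=q!$ with $q=p!^{(m-1)}$ (setting $p!^{(0)}=p$), and that $q=p!^{(m-1)}\geq p$.

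Next I would reduce both statements to a single fact about gaps between factorials. For a candidate $M=p!^{(m)}$ with $p\in\N_+\setminus N_1$, membership $M+k\in N_1\cup(N_1+1)\cup\ldots\cup(N_1+(k-1))$ holds if and only if $M+i$ is a factorial for some $i\in\{1,\ldots,k\}$ (write $i=k-j$); symmetrically $M-k\in N_1\cup(N_1-1)\cup\ldots\cup(N_1-(k-1))$ holds if and only if $M-i$ is a factorial for some $i\in\{1,\ldots,k\}$. Moreover, injectivity of $x\mapsto x+k$, respectively of $x\mapsto x-k$ on $\{x>k\}$, shows that $M+k\in(N_m+k)\setminus(N_{m+1}+k)$ and (once $M>k$) $M-k\in(N_m-k)\setminus(N_{m+1}-k)$, precisely because $M\in N_m\setminus N_{m+1}$. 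Hence it suffices to find one $p\in\N_+\setminus N_1$ with $M=p!^{(m)}$ such that none of $M+1,\ldots,M+k$ and none of $M-1,\ldots,M-k$ is a factorial; then $M+k$ settles (a) and $M-k$ settles (b).

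The main (and only mildly delicate) step is then the elementary gap estimate. Writing $M=q!$ with $q=p!^{(m-1)}$, the factorials immediately adjacent to $M$ are $(q-1)!$ and $(q+1)!$, and there are no factorials strictly between consecutive ones. The two relevant gaps $(q+1)!-q!=q\cdot q!$ and $q!-(q-1)!=(q-1)\cdot(q-1)!$ tend to infinity with $q$. Since the non-factorials are cofinal in $\N$ and $q=p!^{(m-1)}\geq p$, I can pick $p\in\N_+\setminus N_1$ large enough that $M>k$ and both gaps exceed $k$. Then $M+1,\ldots,M+k$ all lie in the open interval $(q!,(q+1)!)$ and $M-1,\ldots,M-k$ all lie in $((q-1)!,q!)$, so none of them is a factorial, completing both parts. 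Everything apart from this gap estimate is routine bookkeeping, so I expect no serious obstacle.
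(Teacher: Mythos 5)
Your proof is correct and follows essentially the same route as the paper: both arguments pick a witness $M=p!^{(m)}$ with $p\notin N_1$ large enough that the factorial gaps $(q+1)!-q!=q\cdot q!$ and $q!-(q-1)!=(q-1)\cdot(q-1)!$ (with $q=p!^{(m-1)}$, $M=q!$) exceed $k$, so that none of $M\pm 1,\ldots,M\pm k$ is a factorial. The only cosmetic difference is that you handle (a) and (b) with a single witness while the paper treats them separately.
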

\begin{proof}
(a) Let $m,k\in\N_+$. We have to prove that there exists $n_0\in\N_+\setminus N_1$ such that $n_0!^{(m)}+k\neq n!+h$ (i.e., $n_0!^{(m)}+(k-h)\neq n!$) for every $n\in\N_+$ and $h\in\{0,\ldots,k-1\}$. 

Pick $n_0\in\N_+$ such that $k< M\cdot M!$, where $M= n_0!^{(m-1)}$ and so $M!=n_0!^{(m)}$
 (it suffices for example that $n_0>k$). In particular $n_0>1$ and for every $h\in\{0,\ldots,k-1\}$
$$
k-h<M\cdot M!.
$$
Consequently,
$$
M!+(k-h)<M!+M\cdot M!=(M+1)!.
$$
Then for every $h\in\{0,\ldots,k-1\}$, 
\begin{equation}\label{!m-a}
M!<M!+(k-h)<(M+1)!.
\end{equation}
Since $(M+1)!$ is the smallest factorial bigger than $M!$, it follows that $M!+(k-h)\neq n!$ for every $n\in\N$. 

If $n_0=n_1!$ for some $n_1\in\N_+$ (i.e., $n_0\in N_1$), then take $n_0+1$ and $M= (n_0+1)!^{(m-1)}$, which satisfies the same condition \eqref{!m-a} but $n_0+1\not\in N_1$.

\smallskip
(b) Let $m,k\in\N_+$. We have to prove that there exists $n_0\in\N_+\setminus N_1$ such that $n_0!^{(m)}-k\neq n!-h$ (i.e., $n_0!^{(m)}-(k-h)\neq n!$) for every $n\in\N_+$ and $h\in\{0,\ldots,k-1\}$.

Pick $n_0\in\N_+$ such that $k<(M-1)\cdot (M-1)!$, where as before $M= n_0!^{(m-1)}$ and so $M!=n_0!^{(m)}$ (it suffices for example that $n_0>k$).
In particular $n_0>1$ and for every $h\in\{0,\ldots,k-1\}$ $k-h<(M-1)\cdot (M-1)!$, that is,
$$-(k-h)>(M-1)\cdot (M-1)!.$$
Consequently,
$$
(M-1)!=M!-(M-1)\cdot (M-1)!<M!-(k-h).
$$
Then, for every $h\in\{0,\ldots,k-1\}$,
\begin{equation}\label{!m-b}
(M-1)!<M!-(k-h)<M!.
\end{equation}
Since $(M-1)!$ is the biggest factorial smaller than $M!$, it follows that $M!-(k-h)\neq n!$ for every $n\in\N$.

If $n_0=n_1!$ for some $n_1\in\N_+$ (i.e., $n_0\in N_1$), then take $n_0+1$ and $M= (n_0+1)!^{(m-1)}$, which satisfies the same condition \eqref{!m-b} and $n_0+1\not\in N_1$.
\end{proof}

In particular, it follows from this lemma that for every $m,k\in\N_+$,
\begin{align*}
& N_m+k\not\subseteq N_1\cup (N_1+1)\cup\ldots\cup (N_1+(k-1)),\ \text{and} \\
& N_m-k\not\subseteq N_1\cup (N_1-1)\cup\ldots\cup (N_1-(k-1)).
\end{align*}

\begin{remark}\label{supp(x)}
Consider the group $K^{\N}$, where $K$ is a non-trivial finite abelian group. Let $t\in\N_+$ and $k\in\Z$. If $x\in \Delta K^{N_1+k}+\ldots+\Delta K^{N_t+k}$ then $\supp(x)=Q_1\dot{\cup}\ldots\dot{\cup}Q_t,$ where
\begin{align*} 
Q_1&=\begin{cases} \text{either} & (N_1+k)\setminus (N_2+k)\\ \text{or} & \emptyset \end{cases}, \\ 
 & \vdots \\
 Q_{t-1}&=\begin{cases} \text{either} & (N_{t-1}+k)\setminus (N_t+k)\\ \text{or} & \emptyset \end{cases}, \\
 Q_t&=\begin{cases} \text{either} & N_t+k\\ \text{or} & \emptyset \end{cases}.
\end{align*}
In particular, if $\supp(x)\cap (N_t+k)$ is not empty, then $Q_t$ is not empty. Therefore $Q_t=N_t+k$, and hence $\supp(x)\supseteq N_t+k$.
\end{remark}

\begin{lemma}\label{>N_s+1->>N_s}
Let $t\in\N_+$, $k\in\Z$ and let $x\in\Delta K^{N_1+k}+\ldots+\Delta K^{N_t+k}$. If $\supp(x)\subsetneq N_t+k$, then $x=0$.
\end{lemma}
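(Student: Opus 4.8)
The plan is to deduce the statement directly from the structural description of $\supp(x)$ already recorded in Remark~\ref{supp(x)}, so that very little new work is needed. First I would write $x = x^{(1)} + \cdots + x^{(t)}$ with $x^{(j)} \in \Delta K^{N_j+k}$, meaning that each $x^{(j)}$ takes a single constant value $a_j \in K$ on $N_j + k$ and vanishes off it. The key point I would isolate is the nesting $N_1 + k \supseteq N_2 + k \supseteq \cdots \supseteq N_t + k$ (which follows from $N_{j+1}\subseteq N_j$): on the innermost set $N_t + k$ every one of the $t$ summands contributes its constant, so $x$ is \emph{constant}, equal to $a_1 + \cdots + a_t$, throughout $N_t + k$. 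This single observation does all the work.

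From it I would extract the all-or-nothing dichotomy for the support over $N_t + k$: either $a_1 + \cdots + a_t \neq 0$, in which case $\supp(x) \cap (N_t + k) = N_t + k$, or $a_1 + \cdots + a_t = 0$, in which case $\supp(x) \cap (N_t + k) = \emptyset$. This is exactly the assertion at the end of Remark~\ref{supp(x)}, namely that $\supp(x) \cap (N_t + k) \neq \emptyset$ forces $\supp(x) \supseteq N_t + k$, and I would simply invoke it rather than recompute.

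To conclude, I would assume $\supp(x) \subsetneq N_t + k$ and argue by contradiction. If $x \neq 0$ then $\supp(x) \neq \emptyset$, and since $\supp(x) \subseteq N_t + k$ we get $\supp(x) \cap (N_t + k) = \supp(x) \neq \emptyset$; the dichotomy above then yields $\supp(x) \supseteq N_t + k$, whence $\supp(x) = N_t + k$, contradicting the strict inclusion $\supp(x) \subsetneq N_t + k$. Hence $\supp(x) = \emptyset$ and $x = 0$.

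I do not expect a genuine obstacle here: once one observes that the nesting of the $N_j$ makes $x$ constant on $N_t + k$, the lemma is immediate. The only thing demanding care is the bookkeeping of which summands survive on each annulus $(N_j + k) \setminus (N_{j+1} + k)$ — only the first $j$ of them — as opposed to on $N_t + k$, where all $t$ genuinely overlap; but this is precisely the computation already performed in Remark~\ref{supp(x)}, so in practice I would lean on that remark instead of redoing it.
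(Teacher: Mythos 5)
Your proof is correct and follows essentially the same route as the paper: both arguments reduce the lemma to the observation at the end of Remark~\ref{supp(x)} that $\supp(x)\cap(N_t+k)\neq\emptyset$ forces $\supp(x)\supseteq N_t+k$, which is incompatible with the strict inclusion $\supp(x)\subsetneq N_t+k$, so that $\supp(x)$ must be empty. Your explicit justification of that dichotomy via the nesting $N_1+k\supseteq\cdots\supseteq N_t+k$ and the constancy of $x$ on $N_t+k$ is exactly the computation underlying the remark, so there is nothing to add.
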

\begin{proof}
By Remark \ref{supp(x)}, if $\supp(x)\cap (N_t+k)\neq\emptyset$, it follows that $\supp(x)\supseteq N_t+k$. Then $\supp(x)=\supp(x)\cap (N_t+k)=\emptyset$, that is, $x=0$.
\end{proof}

The following result shows that for every $k\in\N$ the family $\{\Delta K^{N_t+k}:t\in\N_+\}$ of finite subgroups of $K^\N$ is independent.

\begin{lemma}\label{pseudo-eq1-lemma}
Consider the group $K^{\N}$, where $K$ is a non-trivial finite abelian group. If $k\in\N$ is fixed, then for every $t\in\N_+$,
\begin{itemize}
\item[(a)] $\Delta K^{N_1+k}+\ldots+\Delta K^{N_t+k}=\Delta K^{N_1+k}\oplus\ldots\oplus\Delta K^{N_t+k}$;
\item[(b)] $\Delta K^{N_1-k}+\ldots+\Delta K^{N_t-k}=\Delta K^{N_1-k}\oplus\ldots\oplus\Delta K^{N_t-k}$.
\end{itemize}
\end{lemma}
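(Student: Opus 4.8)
The plan is to reduce the claim that every finite sum $\Delta K^{N_1+k}+\ldots+\Delta K^{N_t+k}$ is direct to the single-step independence criterion recorded right after the definition of independent family. Fixing $k\in\N$ and writing $H_t=\Delta K^{N_t+k}$, it suffices to check
$$H_{t+1}\cap(H_1+\ldots+H_t)=\{0\}\qquad\text{for every }t\in\N_+.$$
Indeed, this is exactly the condition characterizing independence of the family $\{H_t:t\in\N_+\}$, and independence forces each finite initial sum $H_1+\ldots+H_t$ to be direct (the case $t=1$ being trivial). Establishing the displayed equality is therefore the whole of part (a).

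To prove the intersection is trivial I would take $y\in H_{t+1}\cap(H_1+\ldots+H_t)$ and feed the two memberships into Lemma \ref{>N_s+1->>N_s}. On the one hand $y\in H_{t+1}=\Delta K^{N_{t+1}+k}$ gives $\supp(y)\subseteq N_{t+1}+k$; since $N_t\setminus N_{t+1}$ is infinite and translation by $k$ is injective, the inclusion $N_{t+1}+k\subsetneq N_t+k$ is \emph{strict}, whence $\supp(y)\subsetneq N_t+k$. On the other hand $y\in H_1+\ldots+H_t=\Delta K^{N_1+k}+\ldots+\Delta K^{N_t+k}$. These are precisely the hypotheses of Lemma \ref{>N_s+1->>N_s}, which therefore yields $y=0$, closing part (a).

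For part (b) the identical scheme applies with $N_t-k$ in place of $N_t+k$, using the analogues of Remark \ref{supp(x)} and Lemma \ref{>N_s+1->>N_s} for the sets $N_t-k$; these hold by the same proofs, with Lemma \ref{subsetneq}(b) replacing Lemma \ref{subsetneq}(a) as the combinatorial input. The argument is essentially bookkeeping on top of the already proved Lemma \ref{>N_s+1->>N_s}, so I expect no deep obstacle. The only points demanding genuine care are (i) checking the \emph{strict} inclusion $N_{t+1}+k\subsetneq N_t+k$ (respectively $N_{t+1}-k\subsetneq N_t-k$), so that Lemma \ref{>N_s+1->>N_s} is actually applicable, and (ii) making sure the minus-translate versions really carry over: this is where Lemma \ref{subsetneq}(b), rather than (a), is essential, guaranteeing that the successive ``shells'' $(N_s-k)\setminus(N_{s+1}-k)$ are non-empty and are not absorbed by the earlier translates, so that the support analysis underlying Remark \ref{supp(x)} still holds verbatim.
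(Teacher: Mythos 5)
Your proof is correct and follows essentially the same route as the paper's: an induction on $t$ whose inductive step is exactly your single-step intersection condition $H_{t+1}\cap(H_1+\ldots+H_t)=\{0\}$, settled by combining the strict inclusion $N_{t+1}+k\subsetneq N_t+k$ with Lemma \ref{>N_s+1->>N_s}. One small correction: Lemma \ref{subsetneq}(b) is not actually needed here (it compares \emph{different} translates and enters only in Lemma \ref{pseudo-eq2-lemma}); for part (b) all that is required is that the sets $N_t-k$ still form a strictly decreasing chain, which holds because $N_t\setminus N_{t+1}$ is infinite and hence contains elements exceeding $k$.
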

\begin{proof}
(a) We proceed by induction. Let $t=2$. Since $N_1+k\supsetneq N_2+k$, it follows that $\Delta K^{{N_1+k}}\cap \Delta K^{{N_2+k}}=\{0\}$. 
Assume now that for $t\geq2$, $\Delta K^{N_1+k}+\ldots+\Delta K^{N_t+k}=\Delta K^{N_1+k}\oplus\ldots\oplus\Delta K^{N_{t}+k}$; we prove that $$(\Delta K^{N_1+k}\oplus\ldots\oplus\Delta K^{N_{t}+k})\cap \Delta K^{N_{t+1}+k}=\{0\}.$$ To this end let $x\in\Delta K^{N_{t+1}+k}$. Then $\supp(x)$ is either empty or $N_{t+1}+k$. 
Since $N_{t+1}+k\subsetneq N_{t}+k$, and in particular $\supp(x)\subsetneq N_{t}+k$, by Lemma \ref{>N_s+1->>N_s} $x\in\Delta K^{N_1+k}\oplus\ldots\oplus\Delta K^{N_{t}+k}$ yields $x=0$. This concludes the proof.

\smallskip
(b) is analogous to (a).
\end{proof}

\begin{lemma}\label{pseudo-eq2-lemma}
Consider $K^\N$, where $K$ is a non-trivial finite abelian group. For $t\in\N_+$, $l\in\Z$, let $\Delta_{t,l}=\Delta K^{N_1+l}\oplus\ldots\oplus\Delta K^{N_t+l}$. Then, for a fixed $t\in\N_+$, and for every $k\in\N$, 
\begin{itemize}
\item[(a)]$\Delta_{t,0}+\Delta_{t,1}+\ldots+\Delta_{t,k}=\Delta_{t,0}\oplus\Delta_{t,1}\oplus\ldots\oplus\Delta_{t,k}$; and
\item[(b)]$\Delta_{t,0}+\Delta_{t,-1}+\ldots+\Delta_{t,-k}=\Delta_{t,0}\oplus\Delta_{t,-1}\oplus\ldots\oplus\Delta_{t,-k}$.
\end{itemize}
\end{lemma}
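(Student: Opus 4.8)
The plan is to deduce each directness statement from \emph{independence} of the relevant finite family of subgroups of $K^\N$, since a sum of subgroups is direct precisely when the family is independent. By the criterion recorded just after the definition of independence, it suffices to verify the one-sided intersection condition at each step. Concretely, for (a) I would show that for every $j\in\{1,\ldots,k\}$
$$\Delta_{t,j}\cap\big(\Delta_{t,0}+\Delta_{t,1}+\ldots+\Delta_{t,j-1}\big)=\{0\},$$
and (b) reduces to the analogous statement with the negative shifts.

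For part (a), fix $j\in\{1,\ldots,k\}$ and take $x$ in the intersection above. Since $x\in\Delta_{t,j}=\Delta K^{N_1+j}+\ldots+\Delta K^{N_t+j}$, Remark \ref{supp(x)} (with $k$ replaced by $j$) gives $\supp(x)=Q_1\dot{\cup}\ldots\dot{\cup}Q_t$, where for $m<t$ the set $Q_m$ is either $(N_m+j)\setminus(N_{m+1}+j)$ or empty, and $Q_t$ is either $N_t+j$ or empty. On the other hand, each summand $\Delta_{t,i}$ has support contained in $N_1+i$ (because $N_m+i\subseteq N_1+i$ for all $m$), so membership of $x$ in $\Delta_{t,0}+\ldots+\Delta_{t,j-1}$ forces
$$\supp(x)\subseteq N_1\cup(N_1+1)\cup\ldots\cup(N_1+(j-1)).$$
Now I would argue by contradiction: if $x\neq0$, some $Q_m$ is non-empty, and in every case $Q_m\supseteq(N_m+j)\setminus(N_{m+1}+j)$ (with equality when $m<t$, while $Q_t=N_t+j$ trivially contains $(N_t+j)\setminus(N_{t+1}+j)$). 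By Lemma \ref{subsetneq}(a), applied with this $m$ and with $k$ replaced by $j$, the set $(N_m+j)\setminus(N_{m+1}+j)$ is not contained in $N_1\cup(N_1+1)\cup\ldots\cup(N_1+(j-1))$, so it contains a point lying in $\supp(x)$ yet outside that union, contradicting the displayed containment. Hence $x=0$, the family $\{\Delta_{t,0},\ldots,\Delta_{t,k}\}$ is independent, and (a) follows. Part (b) runs identically, using the negative-shift description of $\supp(x)$ in Remark \ref{supp(x)} together with Lemma \ref{subsetneq}(b).

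The argument is essentially a bookkeeping combination of the two preceding results, so the only delicate points are organizational. First, one must invoke the independence criterion correctly to replace the full direct-sum assertion by the single intersection condition at index $j$. Second, and this is the step I expect to require the most care, the top piece $Q_t$ of the support is the \emph{whole} set $N_t+j$ rather than a difference, so I must explicitly note that $Q_t$ still contains $(N_t+j)\setminus(N_{t+1}+j)$, the set to which Lemma \ref{subsetneq} actually applies; once this boundary case is handled, Remark \ref{supp(x)} pins down the support of an element of $\Delta_{t,j}$ and Lemma \ref{subsetneq} guarantees that this support escapes the union of the lower columns, which is exactly the contradiction needed.
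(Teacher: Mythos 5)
Your proposal is correct and follows essentially the same route as the paper: the paper's proof is an induction on $k$ whose inductive step is exactly your one-sided intersection condition $\Delta_{t,j}\cap(\Delta_{t,0}+\ldots+\Delta_{t,j-1})=\{0\}$, verified via Remark \ref{supp(x)} and Lemma \ref{subsetneq}(a) (resp.\ (b)) just as you do. Your explicit handling of the top piece $Q_t$ corresponds to the paper's displayed consequence of Lemma \ref{subsetneq} that $N_m+k\not\subseteq N_1\cup(N_1+1)\cup\ldots\cup(N_1+(k-1))$.
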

\begin{proof}
(a) We proceed by induction. For $k=1$, we have to prove that $\Delta_{t,0}\cap \Delta_{t,1}=\{0\}$.
Assume that $x\in \Delta_{t,1}= \Delta K^{N_1+1}\oplus\ldots\oplus\Delta K^{N_t+1}$.
By Remark \ref{supp(x)} $\supp(x)=Q_1\dot{\cup}\ldots\dot{\cup} Q_t$, where 
\begin{align*}
Q_1&=\begin{cases}\text{either} & (N_1+1)\setminus (N_2+1) \\ \text{or}& \emptyset \end{cases},\\
& \vdots \\ 
Q_{t-1}&=\begin{cases}\text{either} & (N_{t-1}+1)\setminus (N_t+1) \\ \text{or} & \emptyset \end{cases}, \\
Q_t&=\begin{cases}\text{either} & N_t+1\\ \text{or} & \emptyset\end{cases}.
\end{align*}
If also $x\in\Delta_{t,0}=\Delta K^{N_1}\oplus\ldots\oplus\Delta K^{N_t}$, then $\supp(x)\subseteq N_1$ and so, by Lemma \ref{subsetneq}(a), $Q_i=\emptyset$ for every $i\in\{1,\ldots,t\}$, that is $x=0$.

Suppose now that $k\geq 2$ and that $\Delta_{t,0}+\Delta_{t,1}+\ldots+\Delta_{t,k}=\Delta_{t,0}\oplus\Delta_{t,1}\oplus\ldots\oplus\Delta_{t,k}$.
We have to prove that $(\Delta_{t,0}\oplus\Delta_{t,1}\oplus\ldots\oplus\Delta_{t,k})\cap\Delta_{t,k+1}=\{0\}$.
Let $x\in\Delta_{t,k+1}=\Delta K^{N_1+(k+1)}\oplus\ldots\oplus\Delta K^{N_t+(k+1)}$. Then $\supp(x)=Q_1\dot{\cup}\ldots\dot{\cup} Q_t$, where
\begin{align*} 
Q_1&=\begin{cases}\text{either} & (N_1+(k+1))\setminus (N_2+(k+1)) \\ \text{or} & \emptyset \end{cases},\\
& \vdots \\
Q_{t-1}&=\begin{cases}\text{either} & (N_{t-1}+(k+1))\setminus (N_t+(k+1)) \\ \text{or} & \emptyset \end{cases}, \\
Q_t&=\begin{cases}\text{either} & N_t+(k+1) \\ \text{or} &\emptyset \end{cases}.
\end{align*}
If also $x\in\Delta_{t,0}\oplus\Delta_{t,1}\oplus\ldots\oplus\Delta_{t,k}=(\Delta K^{N_1}\oplus\ldots\oplus\Delta K^{N_t})\oplus(\Delta K^{N_1+1}\oplus\ldots\oplus\Delta K^{N_t+1})\oplus\ldots\oplus(\Delta K^{N_1+k}\oplus\ldots\oplus\Delta K^{N_t+k})$, then $\supp(x)\subseteq N_1\cup (N_1+1)\cup\ldots\cup N_1+k$ and so, by Lemma \ref{subsetneq}(a), $Q_i=\emptyset$ for every $i\in\{1,\ldots,t\}$, that is $x=0$. This concludes the proof.

\smallskip
(b) is analogous to (a).
\end{proof}

This proves that for every $t\in\N_+$ the families $\{\Delta_{t,k}:k\in\N\}$ and $\{\Delta_{t,-k}:k\in\N\}$ of finite subgroups of $K^\N$ are independent.

\section{Proof of Theorem \ref{mt}}\label{infinite}

In \cite{AADGH} the algebraic entropy of a generalized shift $\sigma_\lambda:K^\Gamma\to K^\Gamma$ restricted to the direct sum $\bigoplus_\Gamma K$ was computed precisely; we recall this result in Theorem \ref{mt-sum} below. As noted in the introduction, in this case we have to require that $\lambda$ has finite fibers, because this is equivalent to $\bigoplus_\Gamma K$ being a $\sigma_\lambda$-invariant subgroup of $K^\Gamma$.

In \eqref{aadgh-eq} below the algebraic entropy of $\sigma_\lambda\restriction_{\bigoplus_\Gamma K}$ is expressed as the product of the string number $s(\lambda)$ of $\lambda$ with the logarithm of the cardinality of the finite abelian group $K$. But while the algebraic entropy $\ent(-)$ is either a real number or the symbol $\infty$, the string number $s(-)$ is either a finite natural number or an infinite cardinal. Then for a self-map $\lambda:\Gamma\to \Gamma$ we introduce $s(\lambda)^*$ defined by $s(\lambda)^*=s(\lambda)$ if $s(\lambda)$ is finite and $s(\lambda)^*=\infty$ in case $s(\lambda)$ is infinite. 

\begin{theorem}\emph{\cite[Theorem 4.14]{AADGH}}\label{mt-sum}
Let $\Gamma$ be a set, $\lambda:\Gamma\to\Gamma$ a function such that $\lambda^{-1}(i)$ is finite for every $i\in\Gamma$, and $K$ a non-trivial finite abelian group. Then 
\begin{equation}\label{aadgh-eq}
\ent(\sigma_\lambda\restriction_{\bigoplus_\Gamma K})=s(\lambda)^*\cdot\log|K|.
\end{equation}
\end{theorem}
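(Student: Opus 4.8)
\medskip
The plan is to prove separately the two inequalities $\ent(\sigma_\lambda\restriction_{\bigoplus_\Gamma K})\geq s(\lambda)^*\log|K|$ and $\ent(\sigma_\lambda\restriction_{\bigoplus_\Gamma K})\leq s(\lambda)^*\log|K|$. The only external ingredient is the computation from \cite{DGSZ} that the right Bernoulli shift $\beta_K$ on $\bigoplus_\N K$ has entropy $\log|K|$: the underlying idea is that each member of a family of pairwise disjoint strings of $\lambda$ contributes one independent copy of $\beta_K$, and that on the direct sum nothing else contributes.

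\emph{Lower bound.} Fix a finite $r\leq s(\lambda)$ and choose pairwise disjoint strings $S^{(1)},\dots,S^{(r)}$, with $S^{(j)}=\{s^{(j)}_t\}_{t\in\N}$ and $\lambda(s^{(j)}_t)=s^{(j)}_{t-1}$; passing to acyclic substrings (every string contains one), I may assume $\lambda^k(s^{(j)}_0)\notin S^{(j)}$ for $k\geq1$. Put $F=K^{\{s^{(1)}_0,\dots,s^{(r)}_0\}}$, so $|F|=|K|^r$, and fix $n\in\N_+$ together with the set $D=\{s^{(j)}_t:0\leq t\leq n-1,\ 1\leq j\leq r\}$ of $nr$ distinct points. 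By Proposition \ref{G_F}(c) the element of $\sigma_\lambda^m(F)$ produced by the value $a$ at $s^{(j)}_0$ is constant equal to $a$ on $\lambda^{-m}(s^{(j)}_0)$; using acyclicity and disjointness one checks that its restriction to $D$ is supported exactly on $s^{(j)}_m$, and that when $T_n(\sigma_\lambda,F)=\sum_{m<n}\sigma_\lambda^m(F)$ is projected onto the coordinates $D$, the resulting $nr\times nr$ system is upper unitriangular (the coordinate at $s^{(j)}_t$ receives the free value on the diagonal, plus contributions only from levels $m>t$, since $\lambda^m(s^{(j)}_t)=s^{(j)}_{t-m}$ for $m\leq t$ is a bottom only in the diagonal case). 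Hence the projection of $T_n(\sigma_\lambda,F)$ onto $K^D$ is onto, so $|T_n(\sigma_\lambda,F)|\geq|K|^{nr}$ and $H(\sigma_\lambda,F)\geq r\log|K|$. Letting $r\uparrow s(\lambda)$ (and $r\to\infty$ when $s(\lambda)$ is infinite) gives the lower bound.

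\emph{Upper bound.} We may assume $s(\lambda)<\infty$. Let $F$ be any finite subgroup of $\bigoplus_\Gamma K$, with $\supp F\subseteq E$ for a finite $E\subseteq\Gamma$. The decisive structural remark, again from Proposition \ref{G_F}(c), is that each $\sigma_\lambda^m(F)$ is constant on every fibre $\lambda^{-m}(e)$, $e\in E$; hence $\sigma_\lambda^m(F)\subseteq\sigma_\lambda^m(K^E)$ and $|\sigma_\lambda^m(F)|\leq|K|^{a_m}$, where $a_m=|\{e\in E:\lambda^{-m}(e)\neq\emptyset\}|$ is finite since $\lambda$ has finite fibres. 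I would control the entropy through increments: from $T_{n+1}(\sigma_\lambda,F)=T_n(\sigma_\lambda,F)+\sigma_\lambda^n(F)$ and the second isomorphism theorem, $\log|T_{n+1}(\sigma_\lambda,F)|-\log|T_n(\sigma_\lambda,F)|\leq\log|\sigma_\lambda^n(F)|\leq a_n\log|K|$, so by a Cesàro estimate $H(\sigma_\lambda,F)=\lim_n\tfrac1n\log|T_n(\sigma_\lambda,F)|\leq(\limsup_n a_n)\log|K|$. Since $E$ is finite, $a_n$ is eventually constant equal to $|E^\infty|$, where $E^\infty=\{e\in E:\lambda^{-n}(e)\neq\emptyset\ \text{for all}\ n\}$ collects the points of $E$ admitting an infinite backward $\lambda$-orbit; so it remains to show $|E^\infty|\leq s(\lambda)$. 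I would prove this by a König-type argument on the backward-orbit forest of $E$ (vertices $\bigcup_n\lambda^{-n}(E)$, each $i$ joined to its unique image $\lambda(i)$): keeping only the essential vertices, those having preimages of every order, König's lemma and finiteness of fibres give $\lambda(V^\infty_{m+1})=V^\infty_m$ on the successive essential level sets, so $|V^\infty_m|$ is non-decreasing; and because $\lambda$ is a function, the backward subtrees rooted at distinct vertices of a fixed level are disjoint, so $V^\infty_m$ yields $|V^\infty_m|$ pairwise disjoint strings, whose number is bounded by $s(\lambda)$ (Definition \ref{sol2}). Hence $|E^\infty|=|V^\infty_0|\leq|V^\infty_m|\leq s(\lambda)$, and combining the two inequalities yields \eqref{aadgh-eq}.

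\emph{Main obstacle.} The crux is the upper bound, and specifically the passage from the naive support bound $|T_n(\sigma_\lambda,F)|\leq|K|^{|E_n|}$, with $E_n=\bigcup_{m<n}\lambda^{-m}(E)$, to the sharp per-step bound $a_n\log|K|$. The former is hopelessly lossy: $|E_n|$ can grow linearly even when $s(\lambda)=0$ (for instance when $\lambda$ has a ladder). What rescues the estimate is the fibre-constancy of $\sigma_\lambda^m(F)$, whereby an entire fibre $\lambda^{-m}(e)$, however large, carries a single free value, so that in the limit only the $e\in E^\infty$ feeding genuinely infinite backward orbits survive. This is precisely why, on the direct sum $\bigoplus_\Gamma K$, ladders, infinite orbits and long periodic orbits are invisible to the entropy and only disjoint strings are counted, in contrast with the full product $K^\Gamma$ treated in Theorem \ref{mt}.
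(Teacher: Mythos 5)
First, a remark on the comparison itself: the paper does not prove Theorem \ref{mt-sum} --- it is imported verbatim from \cite[Theorem 4.14]{AADGH} --- so there is no internal proof to measure yours against, and I judge the proposal on its own merits. Your lower bound is correct: the projection of $T_n(\sigma_\lambda,F)$ onto the coordinates $D=\{s^{(j)}_t:\ t<n,\ j\leq r\}$ really is unitriangular (the coordinate at $s^{(j)}_t$ picks up the free value from level $m=t$ and perturbations only from levels $m>t$), so solving from $t=n-1$ downwards shows the projection is onto $K^D$, whence $|T_n(\sigma_\lambda,F)|\geq|K|^{nr}$ and $\ent(\sigma_\lambda\restriction_{\bigoplus_\Gamma K})\geq s(\lambda)^*\log|K|$.

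The upper bound, however, has a genuine gap at its final step: the inequality $|E^\infty|\leq s(\lambda)$ is false, for two independent reasons. First, periodic points: if $e\in E\cap\Per(\lambda)$ then $\lambda^{-n}(e)\neq\emptyset$ for every $n$ (walk backwards around the cycle), so $e\in E^\infty$, but the backward path that K\"onig's lemma produces revisits points and is not a string; for $\Gamma=\Z/p\Z$ and $\lambda(i)=i-1$ one has $s(\lambda)=0$ while $E^\infty=E$. Second, merging: distinct points of $E$ lying on the same backward chain all belong to $E^\infty$ but carry only one disjoint string between them --- your claim that backward subtrees rooted at distinct vertices of a fixed level are disjoint fails exactly here. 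Concretely, for $\Gamma=\N$, $\lambda(n)=\max(n-1,0)$ (the map $\lambda_3$ of Example \ref{bernoulli}) and $E=\{1,\dots,N\}$, one has $s(\lambda)=1$ but $|E^\infty|=N$, so your estimate $H(\sigma_\lambda,F)\leq|E^\infty|\log|K|$ does not even keep $\sup_F H(\sigma_\lambda,F)$ finite, let alone equal to $\log|K|$. The loss occurs one step earlier than the combinatorics: bounding $\log|T_{n+1}(\sigma_\lambda,F)|-\log|T_n(\sigma_\lambda,F)|$ by $\log|\sigma_\lambda^n(F)|$ discards the intersection $\sigma_\lambda^n(F)\cap T_n(\sigma_\lambda,F)$, and it is precisely this intersection that eventually absorbs the contribution of periodic points and of all but one point on each backward chain. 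A correct argument must estimate the quotients $T_{n+1}(\sigma_\lambda,F)/T_n(\sigma_\lambda,F)$ directly, e.g.\ by first splitting off $\Per(\lambda)$ and the transient part of the backward orbit of $E$ (whose contribution to the trajectory stabilizes) and showing that what survives for large $n$ is governed by a maximal family of pairwise disjoint acyclic strings meeting $E$; that reduction is the actual content of the proof in \cite{AADGH} and is the missing idea here.
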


This theorem gives the idea of using strings also in the case of the calculation of the algebraic entropy of $\sigma_\lambda:K^\Gamma\to K^\Gamma$. Moreover, one of the main tools in proving this theorem was Remark 4.8 in \cite{AADGH}; the following proposition is its counterpart for $\sigma_\lambda:K^\Gamma\to K^\Gamma$.

\begin{proposition}\label{magic}
Let $\Gamma$ be a set, $\lambda:\Gamma\to\Gamma$ a function, and $K$ a non-trivial finite abelian group. Suppose that $\Gamma=\Gamma'\cup\Gamma''$ a partition of $\Gamma$ and that $\lambda^{-1}(\Gamma')\subseteq \Gamma'$ (i.e., $\lambda(\Gamma'')\subseteq\Gamma''$). Then $$\ent(\sigma_\lambda)=\ent(\sigma_{\lambda}\restriction_{K^{\Gamma'}})+\ent({\sigma_{\lambda\restriction_{\Gamma''}}}).$$ In particular, if $\Lambda$ is a $\lambda$-invariant subset of $\Gamma$, then $\ent(\sigma_\lambda)\geq\ent(\sigma_{\lambda\restriction_\Lambda})$.
\end{proposition}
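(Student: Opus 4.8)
The plan is to reduce the statement to a direct application of the Addition Theorem (Theorem \ref{AT}). The condition $\lambda^{-1}(\Gamma')\subseteq\Gamma'$ is exactly the hypothesis of Lemma \ref{invariance}(a), so $K^{\Gamma'}$ is a $\sigma_\lambda$-invariant subgroup of $K^\Gamma$. First I would set $H=K^{\Gamma'}$ and apply the Addition Theorem to the endomorphism $\sigma_\lambda$ of the torsion abelian group $K^\Gamma$ with respect to this invariant subgroup, obtaining
\begin{equation*}
\ent(\sigma_\lambda)=\ent(\sigma_\lambda\restriction_{K^{\Gamma'}})+\ent(\overline{\sigma_\lambda}),
\end{equation*}
where $\overline{\sigma_\lambda}$ is the endomorphism induced by $\sigma_\lambda$ on the quotient $K^\Gamma/K^{\Gamma'}$.

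The crux of the argument is then to identify this quotient together with its induced endomorphism. Since $\Gamma=\Gamma'\,\dot\cup\,\Gamma''$ is a partition, the natural projection $K^\Gamma\to K^{\Gamma''}$ (restriction of the coordinate-tuple to the indices in $\Gamma''$) has kernel exactly $K^{\Gamma'}=\{x\in K^\Gamma:\supp(x)\subseteq\Gamma'\}$, so it induces an isomorphism $\xi:K^\Gamma/K^{\Gamma'}\to K^{\Gamma''}$. The equivalent reformulation $\lambda(\Gamma'')\subseteq\Gamma''$ of the hypothesis guarantees that $\Gamma''$ is $\lambda$-invariant, so $\sigma_{\lambda\restriction_{\Gamma''}}:K^{\Gamma''}\to K^{\Gamma''}$ is well defined. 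I would then check that $\xi$ conjugates $\overline{\sigma_\lambda}$ to $\sigma_{\lambda\restriction_{\Gamma''}}$: for $i\in\Gamma''$ one has $\lambda(i)\in\Gamma''$, so the $i$-th coordinate of $\sigma_\lambda(x)$, namely $x_{\lambda(i)}$, depends only on the restriction of $x$ to $\Gamma''$, which shows that the action on the $\Gamma''$-coordinates is precisely that of $\sigma_{\lambda\restriction_{\Gamma''}}$. By Proposition \ref{properties}(b) conjugate endomorphisms have equal entropy, whence $\ent(\overline{\sigma_\lambda})=\ent(\sigma_{\lambda\restriction_{\Gamma''}})$, and substituting into the displayed equation yields the claimed formula.

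The main obstacle, and the only point requiring genuine care, is verifying that the induced map on the quotient really is conjugate to $\sigma_{\lambda\restriction_{\Gamma''}}$ rather than to some map that also reads coordinates from $\Gamma'$. This is where the invariance hypothesis in its form $\lambda(\Gamma'')\subseteq\Gamma''$ is essential: it ensures that computing the $\Gamma''$-coordinates of $\sigma_\lambda(x)$ never requires the $\Gamma'$-coordinates of $x$, so that passing to the quotient (i.e.\ forgetting the $\Gamma'$-coordinates) commutes with the shift. I would phrase this as the commutativity of the square relating $\sigma_\lambda$, the projection $\xi$, and $\sigma_{\lambda\restriction_{\Gamma''}}$, which is a routine coordinate-wise check once the invariance of $\Gamma''$ is in place.

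Finally, for the last assertion, if $\Lambda$ is $\lambda$-invariant then $\Gamma\setminus\Lambda$ plays the role of $\Gamma'$ (indeed $\lambda(\Lambda)\subseteq\Lambda$ is the reformulated hypothesis with $\Gamma''=\Lambda$), so the formula just proved gives $\ent(\sigma_\lambda)=\ent(\sigma_\lambda\restriction_{K^{\Gamma\setminus\Lambda}})+\ent(\sigma_{\lambda\restriction_\Lambda})\geq\ent(\sigma_{\lambda\restriction_\Lambda})$, since the algebraic entropy is non-negative.
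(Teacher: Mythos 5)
Your proposal is correct and follows essentially the same route as the paper: both apply Lemma \ref{invariance}(a) to get $\sigma_\lambda$-invariance of $K^{\Gamma'}$, identify the quotient $K^\Gamma/K^{\Gamma'}$ with $K^{\Gamma''}$ via the canonical projection, check that this isomorphism conjugates the induced map to $\sigma_{\lambda\restriction_{\Gamma''}}$, and conclude via Proposition \ref{properties}(b) and the Addition Theorem. Your coordinate-wise verification of the conjugation and your explicit derivation of the final inequality are slightly more detailed than the paper's, which simply asserts these points.
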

\begin{proof}
By Lemma \ref{invariance}(a) $K^{\Gamma'}$ is $\sigma_\lambda$-invariant. Moreover, it is possible to consider $\lambda\restriction_{\Gamma''}:\Gamma''\to\Gamma''$.
Let $p_2:K^\Gamma=K^{\Gamma'}\oplus K^{\Gamma''}\to K^{\Gamma''}$ and $\pi:K^\Gamma\to K^\Gamma/K^{\Gamma'}$ be the canonical projections. Denote by $\xi:K^\Gamma/K^{\Gamma'}\to K^{\Gamma''}$ the (unique) isomorphism such that $p_2=\xi\circ\pi$. Finally, let $\overline{\sigma_\lambda}:K^\Gamma/K^{\Gamma'}\to K^\Gamma/K^{\Gamma'}$ be the homomorphism induced by $\sigma_\lambda$. Then $\overline{\sigma_\lambda}=\xi^{-1}\sigma_{\lambda\restriction_{\Gamma''}}\xi$.
The following diagram explains the situation.
\begin{equation*}
\xymatrix@-0.5pc{
K^{\Gamma'} \ar@{->}[rr]^{\sigma_\lambda\restriction_{K^{\Gamma'}}} \ar@{^{(}->}[d]& &  K^{\Gamma'}\ar@{^{(}->}[d]\\
K^\Gamma \ar@{->}[rr]^{\sigma_\lambda} \ar@{->>}[d]_{\pi} \ar@{-->}[ddr]^{p_2}& & K^\Gamma\ar@{->>}[d]_\pi \ar@{-->}[ddr]^{p_2}\\
K^\Gamma/K^{\Gamma'}\ar@{->}[rr]^{\overline{\sigma_\lambda}} \ar[dr]_{\xi} & & K^\Gamma/K^{\Gamma'} \ar[dr]_\xi\\
& K^{\Gamma''}\ar@{->}[rr]^{\sigma_{\lambda\restriction_{\Gamma''}}} & & K^{\Gamma''}
}
\end{equation*}
By Proposition \ref{properties}(b) $\ent(\overline{\sigma_\lambda})=\ent(\sigma_{\lambda\restriction_{\Gamma''}})$. Applying this equality and Theorem \ref{AT} we have the wanted equality.
\end{proof}

Applying this result in the following two lemmas, we see in particular that in case a function $\lambda$ is not bounded, then the algebraic entropy of the generalized shift $\sigma_\lambda$ is necessarily infinite.

\medskip
The next proposition shows that the algebraic entropy of the generalized shift $\sigma_\lambda$ is infinite in case $\lambda$ admits either a string or an infinite orbit. The proofs of (a) and (b) are similar and in both we apply the technical lemmas of Section \ref{disjoint}.

\begin{lemma}\label{asl>0->ent=infty}
Let $\Gamma$ be a set and $\lambda:\Gamma\to \Gamma$ a function. Let $K$ be a non-trivial finite abelian group and consider the generalized shift $\sigma_\lambda:K^\Gamma\to K^\Gamma$.
\begin{itemize}
\item[(a)]If $s(\lambda)>0$, then $\ent(\sigma_\lambda)=\infty$.
\item[(b)]If $o(\lambda)>0$, then $\ent(\sigma_\lambda)=\infty$.
\end{itemize}
\end{lemma}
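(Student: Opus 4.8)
The plan is to show that a single string (for (a)) or a single infinite orbit (for (b)) already produces finite subgroups of arbitrarily large entropy, whence $\ent(\sigma_\lambda)=\infty$. I treat (a) in detail; (b) is the mirror image, obtained by replacing the ascending sets $N_j+k$ with the descending sets $N_j-k$. Since every string contains an acyclic string, I may assume the given string $S=\{s_t\}_{t\in\N}$ is acyclic, so that $\lambda^k(s_m)\notin S$ whenever $m<k$. Identifying $S$ with $\N$ via $s_n\leftrightarrow n$, for each $t\in\N_+$ I set $H_j=\Delta K^{\{s_n:n\in N_j\}}$ for $j=1,\ldots,t$ and $F_t=H_1+\ldots+H_t\subseteq K^S$; each $H_j$ is a copy of $K$, and by Lemma \ref{pseudo-eq1-lemma}(a) this sum is direct, so $|F_t|=|K|^t$. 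The goal is to prove $H(\sigma_\lambda,F_t)\geq t\log|K|$.

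The key computation concerns the action of the iterates on diagonal subgroups. If $x\in\Delta K^B$ takes the constant value $a$ on $B$, then by Proposition \ref{G_F}(c) the element $\sigma_\lambda^k(x)$ takes the same value $a$ on $\lambda^{-k}(B)$ and vanishes elsewhere, so $\sigma_\lambda^k(H_j)=\Delta K^{\lambda^{-k}(\{s_n:n\in N_j\})}$. This support spills outside $S$, but acyclicity yields the clean restriction $\lambda^{-k}(\{s_n:n\in N_j\})\cap S=\{s_n:n\in N_j+k\}$. Composing with the coordinate projection $p:K^\Gamma\to K^S$ --- a homomorphism, whence $|T_n(\sigma_\lambda,F_t)|\geq|p(T_n(\sigma_\lambda,F_t))|$ --- I therefore get $p(\sigma_\lambda^k(H_j))=\Delta K^{N_j+k}$ inside $K^S\cong K^\N$, and hence
\[
p(T_n(\sigma_\lambda,F_t))=\sum_{k=0}^{n-1}\sum_{j=1}^{t}\Delta K^{N_j+k}=\sum_{k=0}^{n-1}\Delta_{t,k}.
\]

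To count this, Lemma \ref{pseudo-eq1-lemma}(a) gives $\Delta_{t,k}=\bigoplus_{j=1}^{t}\Delta K^{N_j+k}$ of order $|K|^t$, while Lemma \ref{pseudo-eq2-lemma}(a) shows the family $\{\Delta_{t,k}:0\leq k\leq n-1\}$ is independent, so the outer sum is direct of order $|K|^{tn}$. Thus $|T_n(\sigma_\lambda,F_t)|\geq|K|^{tn}$ and
\[
H(\sigma_\lambda,F_t)=\lim_{n\to\infty}\frac{\log|T_n(\sigma_\lambda,F_t)|}{n}\geq t\log|K|.
\]
As $|K|\geq2$ and $t$ is arbitrary, taking the supremum over finite subgroups yields $\ent(\sigma_\lambda)=\infty$. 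For (b) the infinite orbit gives $\lambda^{-k}(\{a_n:n\in N_j\})\cap A=\{a_n:n\in N_j-k\}$, and the identical argument with Lemmas \ref{pseudo-eq1-lemma}(b) and \ref{pseudo-eq2-lemma}(b) applies.

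I expect the main obstacle to be exactly that the trajectory $\sigma_\lambda^k(F_t)$ does not remain in $K^S$: the preimages under $\lambda^{-k}$ drag in coordinates outside the string, so the factorial-subset independence cannot be read off directly in $K^\Gamma$. The device resolving this is the projection $p$ onto the string coordinates, which can only decrease cardinality and so still furnishes a valid lower bound on $|T_n|$; acyclicity is what guarantees the clean intersection formula with no spurious re-entries into $S$. The purely combinatorial core --- that the shifted, stacked diagonal copies stay independent --- is precisely the content of Lemmas \ref{subsetneq}, \ref{pseudo-eq1-lemma} and \ref{pseudo-eq2-lemma}, which I invoke as given.
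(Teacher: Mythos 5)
Your proof is correct, and it rests on exactly the same engine as the paper's: the same finite subgroups $F_t=\Delta K^{S_{1,0}}+\ldots+\Delta K^{S_{t,0}}$ built from the factorial sets $N_j$, and the same independence Lemmas \ref{pseudo-eq1-lemma} and \ref{pseudo-eq2-lemma} to conclude $|T_n(\cdot,F_t)|\geq|K|^{tn}$ and hence $H(\cdot,F_t)\geq t\log|K|$. The one step you do differently is the reduction to the string (resp. orbit) coordinates. The paper passes to the $\lambda$-invariant set $\Lambda=S\cup\{\lambda^n(s_0):n\in\N_+\}$ and invokes Proposition \ref{magic} (hence the Addition Theorem, Theorem \ref{AT}) to get $\ent(\sigma_\lambda)\geq\ent(\sigma_{\lambda\restriction_\Lambda})$; inside $K^\Lambda$ acyclicity makes $\psi^{-k}(S_{m,0})=S_{m,k}$ an exact equality and the count is an equality $|T_k|=|K|^{kt}$. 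You instead stay in $K^\Gamma$ and push the trajectory through the coordinate projection $p:K^\Gamma\to K^S$, using only that a homomorphic image cannot be larger, together with the intersection formula $\lambda^{-k}(\{s_n:n\in N_j\})\cap S=\{s_m:m\in N_j+k\}$ (which is where acyclicity enters for you, just as it does for the paper). This buys a small but genuine economy: your argument needs neither Proposition \ref{magic} nor Theorem \ref{AT}, only the elementary inequality $|T_n|\geq|p(T_n)|$, at the cost of obtaining a lower bound rather than the exact value of $H(\sigma_\lambda,F_t)$ --- which is all the statement requires. Your citation of Lemma \ref{pseudo-eq2-lemma}(b) for the infinite-orbit case is the appropriate one, since there the trajectory stacks the groups $\Delta_{t,0},\Delta_{t,-1},\ldots$.
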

\begin{proof}
(a) Let $S=\{s_n:n\in\N\}$ be a string of $\lambda$ in $\Gamma$; we can suppose without loss of generality that it is acyclic. Let $\Lambda=S\cup\{\lambda^n(s_0):n\in\N_+\}$. Then $\lambda(\Lambda)\subseteq \Lambda$. So let $\psi=\lambda\restriction_\Lambda$. By Proposition \ref{magic} $\ent(\sigma_\lambda)\geq\ent(\sigma_\psi)$, where $\sigma_\psi:K^\Lambda\to K^\Lambda$, and so it suffices to prove that $\ent(\sigma_\psi)=\infty$. 
For $m\in\N_+$ and $k\in\N$ let 
\begin{center}
$S_{m,k}=\{s_{n}:n\in N_m+k\},$ where $N_m+k$ is defined in \eqref{N_m}.
\end{center}
Fix $t\in\N_+$, and let $F_t=\Delta K^{S_{1,0}}+\ldots+\Delta K^{S_{t,0}}.$
For every $k\in\N$, by the definition of string and of $\psi$, and by Proposition \ref{G_F}(c),
$\sigma_\psi^k(\Delta K^{S_{m,0}})=\Delta K^{S_{m,k}}$ for every $m\in\N_+$,
and so 
$$\sigma_\psi^{k}(F_t)=\sigma_\psi^k(\Delta K^{S_{1,0}})+\ldots+\sigma_\psi^k(\Delta K^{S_{t,0}})=
\Delta K^{S_{1,k}}+\ldots+\Delta K^{S_{t,k}}.$$
By Lemma \ref{pseudo-eq1-lemma}(a) this sum is direct, that is,
$\sigma_\psi^k(F_t)=\Delta K^{S_{1,k}}\oplus\ldots\oplus\Delta K^{S_{t,k}}\cong K^t$ for every $k\in\N$.
By Lemma \ref{pseudo-eq2-lemma}(a) for every $k\in\N_+$ the sum $T_k(\sigma_\psi,F_t)=F_t+ \sigma_\psi(F_t)+\ldots+\sigma_\psi^{k-1}(F_t)$ is direct, that is,
$$T_{k}(\sigma_\psi,F_t)=F_t\oplus \sigma_\psi(F_t)\oplus\ldots\oplus\sigma_\psi^{k-1}(F_t)\cong K^{kt}.$$
Then $|T_k(\sigma_\psi,F_t)|=|K|^{k t}$ for every $k\in\N_+$ and so $H(\sigma_\psi,F_t)=t\log|K|$.
Since this can be done for every $t\in\N_+$, it follows that $\ent(\sigma_\psi)=\infty$.

\smallskip
(b) Let $A=\{a_n:n\in\N\}$ be an infinite orbit of $\lambda$. Then $\lambda(A)\subseteq A$; so let $\alpha=\lambda\restriction_A$. By Proposition \ref{magic} $\ent(\sigma_\lambda)\geq\ent(\sigma_\alpha)$, where $\sigma_\alpha:K^A\to K^A$, and so it suffices to prove that $\ent(\sigma_\alpha)=\infty$. 
For $m\in\N_+$ and $k\in\N$ let 
\begin{center}
$A_{m,k}=\{a_{n}:n\in N_m-k\},$ where $N_m-k$ is defined in \eqref{N_m}.
\end{center}
Fix $t\in\N_+$, and let $F_t=\Delta K^{A_{1,0}}+\ldots+\Delta K^{A_{t,0}}.$
For every $k\in\N$, by the definition of infinite orbit and of $\alpha$, and by Proposition \ref{G_F}(c),
$\sigma_\alpha^k(\Delta K^{A_{m,0}})=\Delta K^{A_{m,k}}$ for every $m\in\N_+$,
and so 
$$\sigma_\alpha^{k}(F_t)=\sigma_\alpha^k(\Delta K^{A_{1,0}})+\ldots+\sigma_\alpha^k(\Delta K^{A_{t,0}})=\Delta K^{A_{1,k}}+\ldots+\Delta K^{A_{t,k}}.$$
By Lemma \ref{pseudo-eq1-lemma}(b) this sum is direct, that is,
$\sigma_\alpha^k(F_t)=\Delta K^{A_{1,k}}\oplus\ldots\oplus\Delta K^{A_{t,k}}\cong K^t$ for every $k\in\N$.
By Lemma \ref{pseudo-eq2-lemma}(a) for every $k\in\N_+$ the sum $T_k(\sigma_\alpha,F_t)=F_t+\sigma_\alpha(F_t)+\ldots+\sigma_\alpha^{k-1}(F_t)$ is direct, that is,
$$T_{k}(\sigma_\alpha,F_t)=F_t\oplus \sigma_\alpha(F_t)\oplus\ldots\oplus\sigma_\alpha^{k-1}(F_t)\cong K^{kt}.$$
Then $|T_k(\sigma_\alpha,F_t)|=|K|^{kt}$ for every $k\in\N_+$ and so $H(\sigma_\alpha,F_t)=t\log|K|$.
Since this can be done for every $t\in\N_+$, it follows that $\ent(\sigma_\alpha)=\infty$.
\end{proof}

It is worthwhile noting that in \cite{DGSZ} the algebraic entropy of the Bernoulli shifts restricted to the direct sums was calculated, and in \cite{AADGH} it was described how the left Bernoulli shift $_K\beta$ and the two-sided Bernoulli shift $\overline\beta_K$ are generalized shifts, and how the right Bernoulli shift $\beta_K$ can be ``approximated" by a generalized shift with the same algebraic entropy:

\begin{example}\label{bernoulli}
Let $K$ be a non-trivial finite abelian group, and consider the Bernoulli shifts $\beta_K,\ _K\beta:K^\N\to K^\N$ and $\overline\beta_K:K^\Z\to K^\Z$ (defined in the introduction).
\begin{itemize}
\item[(a)] Then:
\begin{itemize}
\item[(a$_1$)] $_K\beta=\sigma_{\lambda_1}$, with $\lambda_1:\N\to\N$ defined by $n\mapsto n+1$ for every $n\in\N$;
\item[(a$_2$)] $\overline\beta_K=\sigma_{\lambda_2}$, with $\lambda_2:\Z\to\Z$ defined by $n\mapsto n-1$ for every $n\in\Z$;
\item[(a$_3$)] $\ent(\beta_K)=\ent(\sigma_{\lambda_3})$, where $\lambda_3:\N\to\N$ is defined by $n\mapsto n-1$ for every $n\in\N_+$ and $0\mapsto 0$, since $\beta_K\restriction_{K^{\N_+}}=\sigma_{\lambda_3}\restriction_{K^{\N_+}}$ and $K^\N/K^{\N_+}\cong K$ is finite --- so it is possible to apply Theorem \ref{AT}.
\end{itemize}
Note that $s(\lambda_1)=0$ and $o(\lambda_1)=1$, $s(\lambda_2)=o(\lambda_2)=1$, $s(\lambda_3)=1$ and $o(\lambda_3)=0$.
\item[(b)] It can be seen as a consequence of item (a) and Theorem \ref{mt-sum} that $$\ent(\beta_K\restriction_{\bigoplus_\N K})=\ent(\overline\beta_K\restriction_{\bigoplus_\Z K})=\log|K|$$ and $$\ent(_K\beta\restriction_{\bigoplus_\N K})=0.$$
\end{itemize}
\end{example}

Lemma \ref{asl>0->ent=infty}, together with this example, gives as a corollary the value of the algebraic entropy of the Bernoulli shifts considered on the direct products:

\begin{corollary}\label{beta}
Let $K$ be a non-trivial finite abelian group, and consider the Bernoulli shifts $\beta_K,\ _K\beta:K^\N\to K^\N$ and $\overline\beta_K:K^\Z\to K^\Z$. Then $$\ent(\beta_K)=\ent(_K\beta)=\ent(\overline\beta_K)=\infty.$$
\end{corollary}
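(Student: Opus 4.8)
The plan is to deduce Corollary \ref{beta} directly from the machinery already assembled, chiefly Lemma \ref{asl>0->ent=infty} and the identifications of the Bernoulli shifts as (or nearly as) generalized shifts recorded in Example \ref{bernoulli}. The three shifts $\beta_K$, $_K\beta$, and $\overline\beta_K$ are handled separately, but in each case the strategy is identical: exhibit either a string or an infinite orbit of the underlying function $\lambda$, and then invoke Lemma \ref{asl>0->ent=infty}, which guarantees that the mere presence of such a configuration forces $\ent(\sigma_\lambda)=\infty$.

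Concretely, I would first treat $_K\beta=\sigma_{\lambda_1}$ with $\lambda_1(n)=n+1$ on $\N$. Here the whole set $\N$ is an infinite orbit of $\lambda_1$ (the sequence $a_t=t$ satisfies $\lambda_1(a_t)=a_{t+1}$), so $o(\lambda_1)\geq 1>0$, and Lemma \ref{asl>0->ent=infty}(b) yields $\ent(_K\beta)=\infty$. For $\overline\beta_K=\sigma_{\lambda_2}$ with $\lambda_2(n)=n-1$ on $\Z$, the set $\Z$ is simultaneously a string and an infinite orbit (indeed Example \ref{bernoulli} records $s(\lambda_2)=o(\lambda_2)=1$); applying either part of Lemma \ref{asl>0->ent=infty} gives $\ent(\overline\beta_K)=\infty$. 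Finally, for $\beta_K$, I use that $\ent(\beta_K)=\ent(\sigma_{\lambda_3})$ where $\lambda_3(n)=n-1$ for $n\in\N_+$ and $\lambda_3(0)=0$; the sequence $s_t=t$ is a string of $\lambda_3$ (one has $\lambda_3(s_t)=s_{t-1}$ for $t\in\N_+$), so $s(\lambda_3)\geq 1>0$ and Lemma \ref{asl>0->ent=infty}(a) delivers $\ent(\sigma_{\lambda_3})=\infty$, hence $\ent(\beta_K)=\infty$.

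I do not anticipate any genuine obstacle: the corollary is essentially a bookkeeping consequence of Lemma \ref{asl>0->ent=infty} once the combinatorial data $s(\lambda_i)$, $o(\lambda_i)$ from Example \ref{bernoulli} are in hand. The only point deserving care is the case of $\beta_K$, where $\beta_K$ is not literally a generalized shift but only satisfies $\ent(\beta_K)=\ent(\sigma_{\lambda_3})$ via the Addition Theorem argument already carried out in Example \ref{bernoulli}(a$_3$); I must therefore phrase that step through the established entropy equality rather than through a direct identification. With that caveat noted, the proof reduces to three short applications of the lemma.

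\begin{proof}
By Example \ref{bernoulli}(a) we have $_K\beta=\sigma_{\lambda_1}$, $\overline\beta_K=\sigma_{\lambda_2}$, and $\ent(\beta_K)=\ent(\sigma_{\lambda_3})$, where $\lambda_1,\lambda_3:\N\to\N$ and $\lambda_2:\Z\to\Z$ are the functions defined there. Moreover, as noted in the same example, $o(\lambda_1)=1$, $o(\lambda_2)=1$, and $s(\lambda_3)=1$; in particular each of these is positive. Applying Lemma \ref{asl>0->ent=infty}(b) to $\lambda_1$ gives $\ent(_K\beta)=\ent(\sigma_{\lambda_1})=\infty$, and applying it to $\lambda_2$ gives $\ent(\overline\beta_K)=\ent(\sigma_{\lambda_2})=\infty$. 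Applying Lemma \ref{asl>0->ent=infty}(a) to $\lambda_3$ gives $\ent(\sigma_{\lambda_3})=\infty$, whence $\ent(\beta_K)=\ent(\sigma_{\lambda_3})=\infty$. This proves that $\ent(\beta_K)=\ent(_K\beta)=\ent(\overline\beta_K)=\infty$.
\end{proof}
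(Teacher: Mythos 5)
Your proof is correct and follows exactly the route the paper intends: the corollary is stated as an immediate consequence of Lemma \ref{asl>0->ent=infty} combined with the identifications and the values of $s(\lambda_i)$, $o(\lambda_i)$ recorded in Example \ref{bernoulli}, which is precisely what you carry out. Your explicit care with $\beta_K$ (using the entropy equality $\ent(\beta_K)=\ent(\sigma_{\lambda_3})$ rather than a literal identification as a generalized shift) is exactly the right reading of Example \ref{bernoulli}(a$_3$).
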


Now we show that the algebraic entropy of a generalized shift $\sigma_\lambda$ is infinite also in case $\lambda$ has a ladder and in case $\lambda$ has a periodic ladder, that is, periodic orbits of arbitrarily large length. The technique used in the proof of this result is different from that used in the proof of Lemma \ref{asl>0->ent=infty}, and this is why we give them separately.

\begin{lemma}\label{ladder}\label{big:orbits}
Let $\Gamma$ be a set, $\lambda:\Gamma\to \Gamma$ a function, $K$ a non-trivial finite abelian group and consider the generalized shift $\sigma_\lambda:K^\Gamma\to K^\Gamma$. 
\begin{itemize}
\item[(a)] If $l(\lambda)>0$, then $\ent(\sigma_\lambda)=\infty$.
\item[(b)] If $p(\lambda)>0$, then $\ent(\sigma_\lambda)=\infty$.
\end{itemize}
\end{lemma}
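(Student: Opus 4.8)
The plan is to prove the two parts by different routes, the dividing line being that a periodic ladder is $\lambda^{-1}$-invariant whereas a ladder is not. For (b) I would combine the non-quasi-periodic point produced in Lemma \ref{uff}(b) with the infinitude criterion of Lemma \ref{invariance}(b). For (a) I would project onto the coordinates of a single ladder and, exploiting the unbounded lengths of its rungs, construct for each $t\in\N_+$ a finite subgroup $F_t$ whose $\sigma_\lambda$-trajectory has size $|K|^{nt}$, so that $H(\sigma_\lambda,F_t)\ge t\log|K|$.

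For (b), recall first that $p(\lambda)>0$ forces $p(\lambda)\ge\omega$, so there is an infinite family $\{P^{(i)}\}_{i\in\N}$ of pairwise disjoint periodic ladders. Each $P^{(i)}=\bigcup_n P^{(i)}_n$ is a disjoint union of $\lambda$-cycles, hence $\lambda^{-1}(P^{(i)})=P^{(i)}$; in particular $K^{P^{(i)}}$ is $\sigma_\lambda$-invariant by Lemma \ref{invariance}(a). By Lemma \ref{uff}(b) there is $x\in K^{P^{(i)}}$ which is not quasi-periodic for $\sigma_\lambda$, and since its whole $\sigma_\lambda$-orbit stays inside the invariant subgroup $K^{P^{(i)}}$, Proposition \ref{properties}(a) yields $\ent(\sigma_\lambda\restriction_{K^{P^{(i)}}})>0$. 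The $P^{(i)}$ are pairwise disjoint and $\lambda^{-1}$-invariant, so Lemma \ref{invariance}(b) applies verbatim and gives $\ent(\sigma_\lambda)=\infty$.

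For (a), fix a ladder $L=\bigcup_{m\in\N}L_m$ with $L_m=\{l_{m,0},\dots,l_{m,b_m}\}$ and $b_m\ge m$. Here the geometry is genuinely two-dimensional and I would not need Lemma \ref{pseudo-eq2-lemma}. The key observation is that within $L$ the only $\lambda$-preimage of a rung $l_{m,k}$ is $l_{m,k+1}$, so by Proposition \ref{G_F}(c) a diagonal over a set of bottoms climbs the ladder one level at a time. I would index the bottoms by the factorial sets, putting $B_r=\{l_{m,0}:m\in N_r\}$ and $F_t=\Delta K^{B_1}\oplus\dots\oplus\Delta K^{B_t}$, the sum being direct by Lemma \ref{pseudo-eq1-lemma}(a) under the identification $m\leftrightarrow l_{m,0}$. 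Then $\sigma_\lambda^s(\Delta K^{B_r})=\Delta K^{\lambda^{-s}(B_r)}$, and after the coordinate projection $\pi_L:K^\Gamma\to K^L$ (a homomorphism, so $|T_n|\ge|\pi_L(T_n)|$) this becomes the diagonal over $\{l_{m,s}:m\in N_r,\ b_m\ge s\}$, a set that is non-empty for every $s$ precisely because the rungs have unbounded length. The two independence statements are now easy: rungs at distinct levels $s\neq s'$ are disjoint coordinates, so the summands $\sigma_\lambda^s(F_t)$, $s=0,\dots,n-1$, are automatically independent, while at a fixed level the $t$ families over $r=1,\dots,t$ are independent by Lemma \ref{pseudo-eq1-lemma}(a). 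Hence $\pi_L(T_n(\sigma_\lambda,F_t))\cong K^{nt}$, giving $H(\sigma_\lambda,F_t)\ge t\log|K|$ and therefore $\ent(\sigma_\lambda)=\infty$ as $t\to\infty$.

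The main obstacle, and the reason this argument differs from that of Lemma \ref{asl>0->ent=infty}, is the non-invariance of a ladder: unlike a periodic ladder, $L$ is not $\lambda^{-1}$-invariant, since the preimages of the bottoms $l_{m,0}$ generally leave $L$, so I cannot restrict $\sigma_\lambda$ to $K^L$ and invoke Lemma \ref{invariance}(b) directly. The coordinate projection $\pi_L$ circumvents this, because external preimages can only enlarge the genuine trajectory, so the lower bound $|T_n|\ge|\pi_L(T_n)|=|K|^{nt}$ is unaffected. The only routine point to verify is that the level sets $\{m\in N_r:b_m\ge s\}$ differ from $N_r$ by the finitely many indices $m<s$, a finite modification that leaves the independence conclusions of Lemmas \ref{subsetneq} and \ref{pseudo-eq1-lemma} intact.
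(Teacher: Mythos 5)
Your part (a) is correct but takes a genuinely different route from the paper. The paper enlarges $L$ to a $\lambda$-invariant set $\Lambda$, applies Proposition \ref{magic}, splits the ladder into infinitely many pairwise disjoint sub-ladders $\Lambda_i$, obtains $\ent(\sigma_\rho\restriction_{K^{\Lambda_i}})>0$ from the non-quasi-periodic point of Lemma \ref{uff}(a), and concludes by Lemma \ref{invariance}(b); it never computes a trajectory. Your direct count in the style of Lemma \ref{asl>0->ent=infty} works: the level sets $\{l_{m,s}:m\in N_r,\ b_m\ge s\}$ are pairwise disjoint for distinct $s$, which really does replace Lemma \ref{pseudo-eq2-lemma}; they are cofinite modifications of $N_r$, so the nested-chain argument of Lemma \ref{pseudo-eq1-lemma} still gives directness at each fixed level; and $|T_n(\sigma_\lambda,F_t)|\ge|\pi_L(T_n(\sigma_\lambda,F_t))|=|K|^{nt}$ yields $H(\sigma_\lambda,F_t)\ge t\log|K|$. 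What your approach buys is independence from the Addition Theorem for this part; what it costs is the explicit bookkeeping. (One slip in your explanation: the obstruction to $K^L$ being $\sigma_\lambda$-invariant is not ``preimages of the bottoms leaving $L$'' --- those preimages are the $l_{m,1}\in L$ --- but possible preimages of rungs from outside $L$, and it is the \emph{images} $\lambda(l_{m,0})$ leaving $L$ that block a restriction of $\lambda$ to $L$; your projection handles both, so the argument is unaffected.)

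Part (b) has a genuine gap. A periodic ladder $P^{(i)}$ is a union of cycles, so $\lambda(P^{(i)})=P^{(i)}$, but the inference ``hence $\lambda^{-1}(P^{(i)})=P^{(i)}$'' is false: Definition \ref{sol}(d) does not exclude a non-periodic point $g\notin P^{(i)}$ with $\lambda(g)\in P^{(i)}$, in which case $\lambda^{-1}(P^{(i)})\supsetneq P^{(i)}$. Then by Lemma \ref{invariance}(a) $K^{P^{(i)}}$ is \emph{not} $\sigma_\lambda$-invariant, the $\sigma_\lambda$-orbit of your point $x$ does not stay in $K^{P^{(i)}}$, and Lemma \ref{invariance}(b) cannot be applied ``verbatim''. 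The missing step is exactly the one the paper supplies: since $\lambda(P)\subseteq P$, first apply Proposition \ref{magic} (i.e., the Addition Theorem through the quotient $K^\Gamma/K^{\Gamma\setminus P}$) to get $\ent(\sigma_\lambda)\ge\ent(\sigma_\phi)$ with $\phi=\lambda\restriction_P:P\to P$; for $\phi$ the pieces $P^{(i)}$ (or the $P_n$ of a single periodic ladder, grouped into infinitely many infinite families) genuinely satisfy $\phi^{-1}(P^{(i)})\subseteq P^{(i)}$, and your combination of Lemma \ref{uff}(b) with Lemma \ref{invariance}(b) then goes through for $\sigma_\phi$.
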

\begin{proof}
(a) Let $L=\bigcup_{m\in\N}L_m\subseteq \Gamma\setminus\Per(\lambda)$ be a ladder of $\lambda$ in $\Gamma$, where each $L_m=\{l_{m,0},\ldots,l_{m, b_m}\}$. Let $\Lambda=L\cup\{\lambda^n(l_{m,0}):m\in\N, n\in\N_+\}$, which is $\lambda$-invariant and so define $\rho=\lambda\restriction_\Lambda$.  By Proposition \ref{magic} $\ent(\sigma_\lambda)\geq\ent(\sigma_\rho)$, where $\sigma_\rho:K^\Lambda\to K^\Lambda$, and so it suffices to prove that $\ent(\sigma_\rho)=\infty$. 
Let $\N=\bigcup_{i\in\N}N_i$ be a partition of $\N$ in infinitely many infinite subsets $N_i$ of $\N$. For each $i\in\N$ let $\Lambda_i=\bigcup_{m\in N_i}L_m$. Then each $\Lambda_i$ is a ladder of $\rho$ and $L=\bigcup_{i\in\N}\Lambda_i$ is a partition of $L$; so $K^L\cong\prod_{i\in\N}K^{\Lambda_i}$. Since each $\Lambda_i$ is $\rho^{-1}$-invariant, by Lemma \ref{invariance}(a) each $K^{\Lambda_i}$ is a $\sigma_\rho$-invariant subgroup of $K^L$. By Lemma \ref{uff}(a) $\ent(\sigma_\rho\restriction_{K^{\Lambda_i}})>0$ for every $i\in\N$ and so Lemma \ref{invariance}(b) implies that $\ent(\sigma_\lambda)=\infty$.

\smallskip
(b) Let $P=\bigcup_{n\in\N_+}P_n$ be a periodic ladder of $\lambda$ in $\Gamma$.
Since $\lambda(P)\subseteq P$, let $\phi=\lambda\restriction_P$; by Proposition \ref{magic} $\ent(\sigma_\lambda)\geq\ent(\sigma_\phi)$ and so it suffices to prove that $\sigma_\phi:K^P\to K^P$ has $\ent(\sigma_\phi)=\infty$.
Let $\N_+=\bigcup_{i=1}^\infty N_i$ be a partition of $\N_+$ such that each $N_i$ is infinite, and let $\Lambda_i=\bigcup_{n\in N_i}P_n$. Consequently $P=\bigcup_{i=1}^\infty\Lambda_i$ is a partition of $P$, and so $K^P\cong\prod_{i=1}^\infty K^{\Lambda_i}$. For every $i\in\N_+$, $\phi^{-1}(\Lambda_i)\subseteq\Lambda_i$, so by Lemma \ref{invariance}(a) each $K^{\Lambda_i}$ is a $\sigma_\phi$-invariant subgroup of $K^P$. 
 By Lemma \ref{uff}(b) $\ent(\sigma_\rho\restriction_{K^{\Lambda_i}})>0$ for every $i\in\N$ and so Lemma \ref{invariance}(b) implies that $\ent(\sigma_\lambda)=\infty$.
\end{proof}

Thanks to the characterization of bounded functions given by Theorem \ref{bounded}, and in view of the preceding results, we can now prove Theorem \ref{mt}.

\begin{proof}[\bf Proof of Theorem \ref{mt}]
(c)$\Leftrightarrow$(d)$\Leftrightarrow$(e) is Theorem \ref{pre-mt}, while (e)$\Leftrightarrow$(a) is given by Proposition \ref{properties}(a),
and 
(a)$\Rightarrow$(b) is obvious.

\smallskip
(b)$\Rightarrow$(c) Assume that $\lambda$ is not bounded. By Theorem \ref{bounded} this happens if at least one of $s(\lambda)$, $o(\lambda)$, $l(\lambda)$, $p(\lambda)$ is non-zero.
Then apply respectively (a) or (b) of Lemma \ref{asl>0->ent=infty}, or (a) or (b) of Lemma \ref{ladder}.
\end{proof}

Note that among the equivalent conditions of Theorem \ref{mt} it is not possible to add that $\sigma_\lambda\restriction_{\bigoplus_\Gamma K}$ has entropy zero, even when $\sigma_\lambda$ is an automorphism:

\begin{example}
Let $\Gamma$ be a countably infinite set and $\lambda:\Gamma\to \Gamma$ a function such that $\Gamma$ is a periodic ladder of $\lambda$.
Then $\Gamma=\Per(\lambda)$ and $\lambda$ is a bijection, which is locally periodic, non-periodic (and so non-quasi-periodic). While $\ent(\sigma_\lambda)=\infty$ by Lemma \ref{big:orbits}(b), $\ent(\sigma_\lambda\restriction_{\bigoplus_\Gamma K})=0$.
\end{example}

Theorem \ref{mt} can be generalized replacing finite abelian groups by arbitrary torsion abelian groups:

\begin{corollary}
Let $\Gamma$ be a set, $\lambda:\Gamma\to\Gamma$ a function, $K$ a non-trivial torsion abelian group and consider the generalized shift $\sigma_\lambda:K^\Gamma\to K^\Gamma$. Then $\ent(\sigma_\lambda)=0$ if and only if $\lambda$ is bounded, otherwise $\ent(\sigma_\lambda)=\infty$.
\end{corollary}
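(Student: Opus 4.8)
The plan is to deduce this from the finite-group case Theorem \ref{mt} by a routine reduction argument; the only genuinely new ingredient is the passage to the torsion subgroup, which is needed because $K^\Gamma$ itself need not be torsion when $K$ has unbounded exponent. Recall that in all cases $\ent(\sigma_\lambda)=\ent(\sigma_\lambda\restriction_{t(K^\Gamma)})$, so I may freely work inside the $\sigma_\lambda$-invariant torsion subgroup $t(K^\Gamma)$.

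First I would treat the direction ``$\lambda$ not bounded $\Rightarrow \ent(\sigma_\lambda)=\infty$''. Since $K$ is a non-trivial torsion group, choosing any $a\in K\setminus\{0\}$ the cyclic subgroup $H=\langle a\rangle$ is finite and non-trivial. As recalled just before Proposition \ref{composition}, $H^\Gamma$ is a $\sigma_{\lambda,K}$-invariant subgroup of $K^\Gamma$ with $\sigma_{\lambda,K}\restriction_{H^\Gamma}=\sigma_{\lambda,H}$; moreover $H^\Gamma$ is torsion, so $H^\Gamma\subseteq t(K^\Gamma)$. By the monotonicity of the algebraic entropy under invariant subgroups (applied inside $t(K^\Gamma)$),
\[
\ent(\sigma_{\lambda,K})=\ent(\sigma_{\lambda,K}\restriction_{t(K^\Gamma)})\geq\ent(\sigma_{\lambda,K}\restriction_{H^\Gamma})=\ent(\sigma_{\lambda,H}).
\]
Since $H$ is a non-trivial finite abelian group and $\lambda$ is not bounded, Theorem \ref{mt} applied to $\sigma_{\lambda,H}$ gives $\ent(\sigma_{\lambda,H})=\infty$, whence $\ent(\sigma_{\lambda,K})=\infty$.

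For the converse I would show that boundedness of $\lambda$ forces $\ent(\sigma_\lambda)=0$ for arbitrary torsion $K$, routed through Proposition \ref{properties}(a). By Theorem \ref{bounded} a bounded $\lambda$ is quasi-periodic, and by Proposition \ref{qp} this is equivalent to $\sigma_\lambda$ being quasi-periodic; in particular $\sigma_\lambda$ is locally quasi-periodic on all of $K^\Gamma$, hence so is its restriction to $t(K^\Gamma)$. Applying Proposition \ref{properties}(a) to the torsion abelian group $t(K^\Gamma)$ yields $\ent(\sigma_\lambda\restriction_{t(K^\Gamma)})=0$, and therefore $\ent(\sigma_\lambda)=0$.

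The main (and essentially only) obstacle is bookkeeping: one must be careful that $K^\Gamma$ need not be torsion, so both the monotonicity estimate and Proposition \ref{properties}(a) have to be invoked inside $t(K^\Gamma)$ rather than on $K^\Gamma$ directly. Once this is observed, Theorem \ref{mt} supplies everything needed and no new combinatorial analysis of $\lambda$ (strings, infinite orbits, ladders, periodic ladders) is required.
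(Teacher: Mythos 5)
Your proof is correct, but it takes a genuinely different route from the paper's for half of the statement. The paper's proof is a two-line reduction: it invokes Proposition \ref{properties}(c) to write $\ent(\sigma_\lambda)=\sup\{\ent(\sigma_{\lambda,F}): F\ \text{a finite subgroup of}\ K\}$, viewing the relevant group as a direct limit of the $\sigma_\lambda$-invariant subgroups $F^\Gamma$, and then applies Theorem \ref{mt} to each $\sigma_{\lambda,F}$. Your treatment of the unbounded case is essentially a fragment of this (a single finite subgroup $H=\langle a\rangle$ together with monotonicity, i.e.\ only the inequality $\ent(\sigma_{\lambda,K})\geq\ent(\sigma_{\lambda,H})$), and is perfectly adequate. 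For the bounded case, however, you do not reduce to finite $K$ at all: you pass through Theorem \ref{bounded} and Proposition \ref{qp} to get quasi-periodicity of $\sigma_\lambda$ itself, and then apply Proposition \ref{properties}(a) to $t(K^\Gamma)$. This buys you something real: the direct-limit step in the paper's proof needs $t(K^\Gamma)$ to be the union of the subgroups $F^\Gamma$ with $F$ finite, which fails when $K$ contains an infinite subgroup of bounded exponent (e.g.\ $K=\bigoplus_{\N}\Z/2\Z$, where $t(K^\Gamma)=K^\Gamma$ but an element whose coordinates generate an infinite subgroup of $K$ lies in no $F^\Gamma$), whereas your argument uses only the unproblematic inequality $\geq$ plus quasi-periodicity. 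The cost, which you correctly pay, is the bookkeeping with $t(K^\Gamma)$, since $K^\Gamma$ need not be torsion and Proposition \ref{properties}(a) is stated for torsion groups.
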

\begin{proof}
By Proposition \ref{properties}(c) 
\begin{align*}
\ent(\sigma_\lambda)&=\sup\{\ent(\sigma_\lambda\restriction_{F^{(\Gamma)}}):F\ \text{is a finite subgroup of}\ K\} \\
&=\sup\{\ent(\sigma_{\lambda,F}):F\ \text{is a finite subgroup of}\ K\}.
\end{align*}
By Theorem \ref{mt} $\ent(\sigma_{\lambda,F})=0$ if and only if $\lambda$ is bounded and otherwise $\ent(\sigma_{\lambda,F})=\infty$.
\end{proof}

\begin{remark}
Let $\Gamma$ be a set, $\lambda:\Gamma\to\Gamma$ a function and $K$ a non-trivial finite abelian group.
In \cite{AADGH} the set $\Gamma^+=\bigcap_{n\in\N_+}\lambda^n(\Gamma)$ was defined. 
\begin{itemize}
\item[(a)] The set $\Gamma^+$ was useful in computing the algebraic entropy of the restriction of a generalized shift to the direct sum, that is, of $\sigma_\lambda\restriction_{\bigoplus_\Gamma K}:\bigoplus_\Gamma K\to\bigoplus_\Gamma K$. Indeed, in order to consider this restriction, $\lambda$ has to have $\lambda^{-1}(i)$ finite for every $i\in \Gamma$ and in this case $\lambda\restriction_{\Gamma^+}:\Gamma^+\to \Gamma^+$ is surjective and $\ent(\sigma_\lambda\restriction_{\bigoplus_\Gamma K})=\ent(\sigma_{\lambda\restriction_{\Gamma^+}}\restriction_{\bigoplus_{\Gamma^+}K})$.
\item[(b)] In general for a function $\lambda:\Gamma\to \Gamma$ it is not true that its restriction to $\Gamma^+$ is surjective. Take for example $\Gamma=\{g,h\}\cup\bigcup_{n\in\N}\Gamma_n$, where for every $n\in\N$, $\Gamma_n=\{g_{n,0},\ldots,g_{n,n}\}$, $\lambda(g_{n,l})=g_{n,l-1}$ for every $l\in\{1,\ldots,n\}$, $\lambda(g_{n,0})=g$, $\lambda(g)=h$ and $\lambda(h)=h$. Then $\Gamma^+=\{g,h\}$, but $g\not\in\lambda(\Gamma^+)=\{h\}$.
\item[(c)] In general it is not true that $\ent(\sigma_\lambda)=\ent(\sigma_{\lambda\restriction_{\Gamma^+}})$, because for example if $\lambda$ admits a ladder $L$  in $\Gamma$, and $\Gamma=L\cup\Per_n(\lambda)$ for some $n\in\N_+$, then $\Gamma^+=\Per_n(\lambda)$, and so $\ent(\sigma_{\lambda\restriction_{\Gamma^+}})=0$, while $\ent(\sigma_\lambda)=\infty$ by Theorem \ref{mt}.
\item[(d)] Observe that the function considered in (c) is not surjective. In fact, it is clear that $\lambda$ is surjective if and only if $\Gamma=\Gamma^+$.\end{itemize}
\end{remark}

\medskip
We explain now in detail how Theorem \ref{mt} solves Problems 6.1 and 6.2 in \cite{AADGH}. Indeed, as asked in the first part of Problem 6.1, it gives the precise value of the algebraic entropy of a generalized shift $\sigma_\lambda$ (in particular this answers negatively Problem 6.2(b), which asked if it was possible that $0<\ent(\sigma_\lambda)<\infty$).

Moreover, Example \ref{bernoulli} and Corollary \ref{beta} answer negatively the question in Problem 6.1, showing that in general it is not true that $\ent(\sigma_\lambda)$ coincides with $\ent(\sigma_\lambda\restriction_{\bigoplus_\Gamma K})$. Indeed, the left Bernoulli shift $_K\beta$ is a generalized shift and has $\ent(_K\beta)=\infty$ by Corollary \ref{beta}, while $\ent(_K\beta\restriction_{\bigoplus_\N K})=0$ by Example \ref{bernoulli}.
This shows also that it is possible that $\ent(\sigma_\lambda\restriction_{\bigoplus_\Gamma K})=0$, while $\ent(\sigma_\lambda)>0$, which was asked in Problem 6.2(a).

\section{Open problems}

For a set $\Gamma$, a function $\lambda:\Gamma\to\Gamma$ and a non-trivial abelian group $K$, we can consider $K^\Gamma$ endowed with the product topology of the discrete topologies on $K$. In this way $K^\Gamma$ is a compact abelian group, and the generalized shift $\sigma_\lambda:K^\Gamma\to K^\Gamma$ is continuous. In relation to Theorem \ref{mt}, the following question arises.

\begin{problem}
Let $K$ be a non-trivial finite abelian group. Does there exist a continuous endomorphism $\phi:K^\N\to K^\N$ with $0<\ent(\phi)<\infty$?
\end{problem}

We conclude the paper by setting the following problem for infinite orbits and $o(-)$, which is similar to Problem 6.6 in \cite{AADGH} for strings and $s(-)$.

\begin{problem}
Let $\Gamma$ be an abelian group and $\lambda:\Gamma\to\Gamma$ a group endomorphism. Calculate $o(\lambda)$. In particular, is it true that $o(\lambda)>0$ implies $o(\lambda)$ infinite?
\end{problem}

\end{document}